\def\eqdef{\stackrel{\rm def}{=}}
\def\d{{\rm d}}
\def\ddt{\frac{\d}{\d t}}
\newcommand{\tnum}{\rm(\roman*)}
\newcommand{\rnum}{\rm(\alph*)}
\def\beq{\begin{equation}}
\def\eeq{\end{equation}}
\def\beqs{\begin{equation*}}
\def\eeqs{\end{equation*}}
\newtheorem{theorem}{Theorem}[section]
\newtheorem{lemma}[theorem]{Lemma}
\newtheorem{proposition}[theorem]{Proposition}
\newtheorem{definition}[theorem]{Definition}
\newtheorem{assumption}[theorem]{Assumption}
\theoremstyle{definition}
\newtheorem{remark}[theorem]{Remark}
\newtheorem*{xnotation}{Notation}
\def\varep{\varepsilon}
\newcommand{\R}{\ensuremath{\mathbb R}}
\newcommand{\N}{\ensuremath{\mathbb N}}
\newcommand{\bigo}{\mathcal O}
\numberwithin{equation}{section}
\title{Behavior near the extinction time for systems of differential equations with sublinear dissipation terms}
\author{Luan Hoang}
\address{Department of Mathematics and Statistics,
Texas Tech University\\
1108 Memorial Circle, Lubbock, TX 79409--1042, U. S. A.}
\email{luan.hoang@ttu.edu}
\keywords{sublinear dissipation, extinction time, extinction profile, vanish in finite time, asymptotic behavior, asymptotic approximation}
\subjclass[2020]{34D05, 41A60}
\date{\today}
\begin{document}

\begin{abstract} 
This paper is focused on the behavior near the extinction time of solutions of systems of ordinary differential equations with a sublinear dissipation term. Suppose the dissipation term is a product of a linear mapping $A$ and a positively homogeneous scalar  function $H$ of a negative degree $-\alpha$.  Then any solution  with an extinction time $T_*$ behaves like $(T_*-t)^{1/\alpha}\xi_*$ as time $t\to T_*^-$, where $\xi_*$ is an eigenvector of $A$. The result allows the higher order terms to be general and the nonlinear function $H$ to take very complicated forms. As a demonstration, our theoretical study is applied to an inhomogeneous population model.
\end{abstract}

\maketitle

\tableofcontents

\pagestyle{myheadings}\markboth{L. Hoang}
{Behavior Near the Extinction Time for Differential Equations with Sublinear Dissipation Terms}


\section{Introduction}\label{intro}
This paper continues our investigations of exact asymptotic behaviors of solutions of systems of nonlinear  ordinary differential equations (ODE), see \cite{CaH3,CaHK1,H5,H7}, and the Navier--Stokes equations (NSE), see, e.g., \cite{HM2,HTi1,CaH2,H6}. 
However, in contrast to the cited work above, the current paper studies the asymptotic behavior near a finite extinction time, instead of time infinity, for equations with sublinear dissipation terms, instead of superlinear in \cite{H7}, or linear in the others.
The phenomena of extinction are widely studied in mathematics, biology and physics, see e.g.  \cite{CMS2023,DPV2016,BPR2009,MNS2023,Pino2002,SJC2023,KaKa2016,BioExtinct2009}. In biology, the extinction time reflects the time when the species' populations go extinct, while, for  fast diffusive fluid flows in porous media, it indicates the time when the pressure or density vanishes everywhere. 
For the precise description of the solutions near a finite extinction time, the mathematical results in \cite{CMS2023,MNS2023,Pino2002} are only for very specific equations. 
Instead, our aim is to establish  the results for general systems of ODEs. 
It turns out that we can achieve this by modifying and improving some techniques  by Foias and Saut in \cite{FS84a} for the NSE. 
For that reason, we  review  \cite{FS84a} and the related literature here. The NSE with potential body forces  can be written in the following functional form, which holds in a certain weak sense in a suitable functional space,
\beq\label{NSE}
u'+Au+B(u,u)=0,
\eeq
where $A$ is  the (linear) Stokes operator which has positive eigenvalues and $B(\cdot,\cdot)$ is a bilinear form. 
It is proved in \cite{FS84a} that any nontrivial solution  $u(t)$ of \eqref{NSE} has the following asymptotic behavior
\beq\label{FSlim}
e^{\Lambda t}u(t)\to \xi_* \text{ in any $C^m$-norms as $t\to\infty$,}
\eeq
where $\Lambda$  is an eigenvalue of $A$ and  $\xi_*$ is an eigenfunction of $A$ associated with $\Lambda$.
For finer asymptotic behaviors than \eqref{FSlim}, Foias and Saut develop a theory of  asymptotic expansions for the solutions of the NSE \eqref{NSE} in \cite{FS87,FS91}. 

On the one hand, the result \eqref{FSlim} and its proof  are extended to abstract differential inequalities in \cite{Ghidaglia1986b,Ghidaglia1986a}. 
On the other hand, the asymptotic expansion theory is developed further for both ODE and partial differential equations (PDE). See \cite{Minea,Shi2000,HM1,HTi1,CaHK1,H7} for systems  without forcing functions, \cite{HM2,CaH1,CaH2,CaH3,H5,H6} for systems with forcing functions, and  \cite{H4} for the Lagrangian trajectories for viscous incompressible fluids. 
Both techniques from \cite{FS84a} and \cite{FS87} are combined in \cite{CaHK1} to deal with nonsmooth ODE systems. Originally, the asymptotic expansions can be obtained independently from the limit \eqref{FSlim}. 
In  \cite{CaHK1}, however, they are obtained only after the first asymptotic approximation \eqref{FSlim} is established.

In the original NSE in \cite{FS84a} as well as the systems in the extended work cited above, except for\cite{H7}, the ODE or PDE have linear  dissipation terms.  On contrary, the author recently studied in \cite{H7} the following ODE system in $\R^n$
\beq\label{yReq}
y'=-H(y) Ay +G(t,y),
\eeq
where $A$ is a constant $n\times n$ matrix with positive eigenvalues, $H$  is a positive function, and $G$ represents a higher order term. 
In \cite{H7}, $H$  is additionally assumed to be a positively homogeneous function of a positive degree $\alpha$. Note that the dissipation term $H(y)Ay$ in \eqref{yReq} is nonlinear compared with, say, the linear  dissipation term $Au$ in \eqref{NSE}, and the higher order term $G$ is not required to be bilinear like $B(u,u)$.
It is proved that  any nonzero, decaying solution of \eqref{yReq} behaves like $t^{-1/\alpha}\xi_*$ as $t\to\infty$, where $\xi_*$ is an eigenvector of $A$.

The current paper considers  the opposite scenario when the function $H$ in \eqref{yReq} has a negative degree $-\alpha$. In this case, many solutions start out with nonzero values and then become zero at a finite time. Such time is called the \textit{extinction time}. Our goal is to describe the behavior of these solutions near this extinction time.
The main result can be briefly described as follows. Under appropriate assumptions, any  solution $y(t)$ of \eqref{yReq} with the extinction time $T_*$ behaves exactly like $(T_*-t)^{1/\alpha}\xi_*$ as time $t\to T_*^-$, where $\xi_*$ is an eigenvector of $A$. It is worth mentioning that the existence of the extinction time is guaranteed under the small nonzero initial data condition, see Theorem \ref{mustdie} below. Our proof will make use and adapt the techniques from \cite{FS84a,H7}. In particular, the recent perturbation method  in \cite{H7} will be utilized. This method is needed to deal with the nonlinear dissipation in our problem. It will be implemented successfully in this paper for the study of the asymptotic behavior near the finite extinction time, instead of at time infinity as in \cite{H7}. The obtained result, in addition to its merits for ODE, also gives hints to a type of results that may be expected for general nonlinear PDE of the similar structure.
 
The paper is organized as follows.
Section \ref{mainresult} contains the main results. 
While the condition for the matrix $A$ is the natural Assumption \ref{assumpA}, the more technical conditions for the function $H$ are specified in  Assumption \ref{Hcond}. A key requirement of $H$, namely, property (HC)   is introduced in Definition \ref{HCdef}.
Theorem  \ref{mustdie} states that under appropriate conditions on $A$, $H$ and $G$, for any sufficiently small nonzero initial condition, there exists a solution of \eqref{yReq}   
that will become zero at finite time. Solutions with a finite extinction time of this type are the objects of our investigation in this paper.
The asymptotic behavior of the solutions to a more general equation \eqref{mainode} near the extinction time is established in Theorem \ref{mainthm}. Its counterpart for equation \eqref{yReq} is Theorem \ref{mainthm2}.
The proof of Theorem  \ref{mustdie} is given in Section \ref{extime}.
Section \ref{facts} prepares for the proof of Theorem \ref{mainthm}. Preliminary estimates for the solutions are obtained in Lemma \ref{estthm}. Although they provide only a rough description of $y(t)$, the upper and lower bounds with the same rate $1/\alpha$ obtained in \eqref{yplus}  are important in our further analysis. 
Section \ref{simeg} proves Theorem \ref{mainthm} for a special case in the form of equation \eqref{basicf}.
This will also serve as the basis for the perturbation argument for the general case in Section \ref{gencase}.
In Section \ref{symcase}, we obtain essential properties of the solutions of \eqref{mainode} when the matrix $A$ is symmetric. 
 In particular, we establish an eigenvalue $\Lambda$ as the limit of the quotient $\lambda(t)$, see \eqref{quot},  in Proposition \ref{lem1}, and a unit vector $v_*$ as the limit of $y(t)/|y(t)|$ in Propositions \ref{lem2} and \ref{lem4}.
In Section \ref{gencase}, we give proof to Theorem \ref{mainthm} first.
It combines all the previous preparations with the perturbation method mentioned earlier, see equation \eqref{Rygood}. This equation is a reduction of equation \eqref{mainode} of $y(t)$ to the simple form \eqref{basicf}, but for the projection $R_\Lambda y(t)$ and with a frozen coefficient $\Lambda H(v_*)$. 
The proof of Theorem \ref{mainthm2} is then quickly provided.
Section \ref{examples} contains some examples for the function $H$ in subsection \ref{egs}, and an application to an inhomogeneous population model in subsection \ref{bioeg}.

\begin{xnotation}
Throughout the paper, $n\in\N=\{1,2,3,\ldots\}$ is the spatial dimension.
For any vector $x\in\R^n$, we denote by $|x|$ its Euclidean norm.
For an $n\times n$ real matrix $A=(a_{ij})_{1\le i,j\le n}$, its Euclidean norm is 
$$\|A\|=\left(\sum_{i=1}^n\sum_{j=1}^n a_{ij}^2\right)^{1/2}.$$
The unit sphere in $\R^n$ is $\mathbb S^{n-1}=\{x\in\R^n:|x|=1\}$.
\end{xnotation}

\section{Main results}\label{mainresult}

We present the main results of the paper in this section.

\begin{assumption}\label{assumpA}
Matrix  $A$ is a (real) diagonalizable $n\times n$ matrix with positive eigenvalues.
\end{assumption}
 
Under Assumption \ref{assumpA}, the matrix $A$ has $n$ positive eigenvalues (counting their multiplicities)
$\Lambda_1\le \Lambda_2\le \Lambda_3\le \ldots\le \Lambda_n$,
and there exists an invertible $n\times n$ (real) matrix $S$ such that
\beq \label{Adiag}
A=S^{-1} A_0 S,
\text{ where } A_0={\rm diag}[\Lambda_1,\Lambda_2,\ldots,\Lambda_n].
\eeq 
In the case $A$ is symmetric, the matrix $S$ is orthogonal, i.e., $S^{-1}=S^{\rm T}$, and
\beq\label{xAx}
\Lambda_1|x|^2\le x\cdot Ax\le \Lambda_n |x|^2 \text{ for all $x\in\R^n$.}
\eeq

More specifically, the distinct eigenvalues of $A$ are denoted by $\lambda_j$, with $1\le j\le d$  for some integer $d\in[1,n]$, and are arranged to be (strictly) increasing in $j$, i.e., 
 \beqs
 0<\lambda_1=\Lambda_1<\lambda_2<\ldots<\lambda_{d}=\Lambda_n.
 \eeqs
The spectrum  of $A$ is $\sigma(A)=\{\Lambda_k:1\le k\le n\}=\{\lambda_j:1\le j\le d\}$.

For $1\le k,\ell\le n$, let $E_{k\ell}$ be the elementary $n\times n$ matrix $(\delta_{ki}\delta_{\ell j})_{1\le i,j\le n}$, where $\delta_{ki}$ and $\delta_{\ell j}$ are the Kronecker delta symbols.
For $\Lambda\in\sigma(A)$, define
 \beqs
\widehat R_\Lambda=\sum_{1\le i\le n,\Lambda_i=\Lambda}E_{ii}\text{ and } R_\Lambda=S^{-1}\widehat R_\Lambda S.
 \eeqs
Then one immediately has
\beq \label{Pc} I_n = \sum_{j=1}^{d} R_{\lambda_j},
\quad R_{\lambda_i}R_{\lambda_j}=\delta_{ij}R_{\lambda_j},
\quad  AR_{\lambda_j}=R_{\lambda_j} A=\lambda_j R_{\lambda_j}.
\eeq 
Thanks to \eqref{Pc}, each $R_\Lambda$ is a projection, and $R_\Lambda(\R^n)$ is the eigenspace of $A$ associated with the eigenvalue $\Lambda$.

In the case $A$ is symmetric,  $R_\Lambda$ is  the orthogonal projection from $\R^n$ to  the eigenspace of $A$ associated with $\Lambda$, and, hence,
\beq \label{Rcontract}
|R_\Lambda x|\le |x| \text{ for  all $x\in\R^n$.}
\eeq

The function $H$ will be assumed to have some type of homogeneity which is specified in the next definition.

\begin{definition}\label{phom} 
Let $X$ and $Y$ be two (real) linear spaces, and $\beta<0$ be a given number. 

A function $F:X\setminus\{0\}\to Y$ is positively homogeneous of degree $\beta$ if
\beqs
F(tx)=t^\beta F(x)\text{ for any $x\in X\setminus\{0\}$ and $t>0$.}
\eeqs

Define $\mathcal H_\beta(X,Y)$ to be the set of functions  from $X\setminus\{0\}$ to $Y$ that are positively homogeneous  of degree $\beta$.
\end{definition}

If $F\in \mathcal H_\beta(X,Y)$ and $F$ is not the zero function, then the degree $\beta$ is unique.

 \begin{assumption}\label{assumpH}
The function $H$ is in $\mathcal H_{-\alpha}(\R^n,\R)$ for some $\alpha>0$, and in $C(\R^n\setminus\{0\},(0,\infty))$.
\end{assumption}

For a function $H$ in Assumption \ref{assumpH}, it is positive and continuous on $\mathbb S^{n-1}$. Hence, we have 
\beq\label{Hmm}
0<c_1=\min_{|x|=1} H(x)\le \max_{|x|=1} H(x) =c_2<\infty.
\eeq 
By writing $ H(x)=|x|^{-\alpha} H(x/|x|)$ for any $x\in\R^n\setminus\{0\}$ and using \eqref{Hmm}, we derive
\beq\label{Hyal}
c_1 |x|^{-\alpha} \le H(x)\le c_2 |x|^{-\alpha} \text{ for all }x\in\R^n\setminus \{0\}.
\eeq

Regarding the function $G$ in equation \eqref{yReq}, we have the following assumption.

 \begin{assumption}\label{assumpG}
Let $t_0$ be any given number in $[0,\infty)$. We assume that the function $G(t,x)$ is continuous on $[t_0,\infty)\times (\R^n\setminus\{0\})$, and there exist positive numbers $c_*, r_*, \delta$ such that 
\beq \label{Fcond}
 |G(t,x)| \le c_*|x|^{1-\alpha+\delta} \text{ for all  $t \ge t_0$, and all $x \in \R^n$  with $0<|x|\le r_*$.}
\eeq  
 \end{assumption}

In our first theorem below, we show that the extinction time exists for, at least, certain small solutions of  \eqref{yReq}. 

\begin{theorem}\label{mustdie}
Under Assumptions \ref{assumpA}, \ref{assumpH} and \ref{assumpG}, there exists a number $r_0>0$ such that for any $y_0\in \R^n\setminus\{0\}$ with $|y_0|\le r_0$, there are a number $T_*>t_0$ and a function 
$y\in C^1([t_0,T_*),\R^n)$ such that $y(t_0)=y_0$,
\beq\label{nz}
y(t)\ne 0\text{ for all $t\in [t_0,T_*)$, }
\eeq 
\beq \label{limzero}
\lim_{t\to T_*^-} y(t)= 0,
\eeq 
and $y(t)$ satisfies equation \eqref{yReq} for all  $t \in (t_0,T_*)$.
In other words, $y(t)$ is a solution of \eqref{yReq} on $(t_0,T_*)$ with the initial data $y_0$ at time $t_0$ and has the extinction time  $T_*$.
\end{theorem}

In fact, $y(t)$ is any solution  with the maximal interval $[t_0,T_{\max})$ in the existence theorem \ref{existsoln} below, and $T_*=T_{\max}$. 
Moreover, if additional conditions are imposed to guarantee the uniqueness of solutions to the initial value problem for equation \eqref{yReq}, then, in the above Theorem \ref{mustdie}, \emph{any} solution with sufficiently small nonzero initial data must have a finite extinction time.
 The proof of Theorem \ref{mustdie} is given in Section \ref{extime}.

For the behavior of a solution near an extinction time, the function $H$ is required to have an extra property. 

\begin{definition}\label{HCdef}
Let $E$ be a nonempty subset of $\R^n$ and $F$ be a function from $E$ to $\R$. We say $F$ has property (HC) on $E$ if, 
for any  $x_0\in E$,  there exist  numbers  $r,C,\gamma>0$ such that
\beq\label{FHder}
|F(x)-F(x_0)|\le C|x-x_0|^\gamma
\eeq
for any $x\in E$ with $|x-x_0|<r$.
\end{definition}

The following are elementary properties of the functions in Definitions \ref{phom} and \ref{HCdef}.

\begin{lemma}\label{Hcts}
Let $F\in \mathcal H_{-\alpha}(\R^n,\R)$ for some $\alpha>0$.
\begin{enumerate}[label=\tnum]
\item\label{HH1} If $F>0$ on $\mathbb S^{n-1}$, then $F>0$ on  $\R^n\setminus \{0\}$.

\item\label{HH2} If $F$ is continuous on  $\mathbb S^{n-1}$, then  it is continuous on $\R^n\setminus \{0\}$.

Assume $F$ has property (HC) on  $\mathbb S^{n-1}$ in \ref{HH3}--\ref{HH5} below.

\item\label{HH3} Then $F$ has property (HC) on $\R^n\setminus\{0\}$.

\item\label{HH4} If  $\varphi$ is a function from $\R^n\setminus\{0\}$ to $\R^n\setminus\{0\}$ that has property (HC) on $\R^n\setminus\{0\}$. Then $F\circ \varphi$ has property (HC) on $\R^n\setminus \{0\}$.

\item\label{HH5}  If $K$ is an invertible $n\times n$ matrix, then the function $x\in \R^n\setminus\{0\} \mapsto F(Kx)$ has property (HC) on $\R^n\setminus \{0\}$.
\end{enumerate}
\end{lemma}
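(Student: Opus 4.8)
The plan is to handle the five parts in order, using the homogeneity identity $F(x) = |x|^{-\alpha} F(x/|x|)$ throughout to transfer information between the sphere and the punctured space. For part \ref{HH1}, given $x \in \R^n \setminus \{0\}$, write $x/|x| \in \mathbb S^{n-1}$, so $F(x) = |x|^{-\alpha} F(x/|x|) > 0$ since $|x|^{-\alpha} > 0$ and $F > 0$ on the sphere; this is immediate. For part \ref{HH2}, observe that the map $x \mapsto (|x|, x/|x|)$ is continuous from $\R^n \setminus \{0\}$ to $(0,\infty) \times \mathbb S^{n-1}$, the map $t \mapsto t^{-\alpha}$ is continuous on $(0,\infty)$, and $F$ is continuous on $\mathbb S^{n-1}$ by hypothesis; hence $x \mapsto |x|^{-\alpha} F(x/|x|) = F(x)$ is continuous as a composition/product of continuous functions.

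The substantive part is \ref{HH3}. Fix $x_0 \in \R^n \setminus \{0\}$ and set $u_0 = x_0/|x_0| \in \mathbb S^{n-1}$. By property (HC) on the sphere applied at $u_0$, there are $r_0, C_0, \gamma_0 > 0$ with $|F(u) - F(u_0)| \le C_0 |u - u_0|^{\gamma_0}$ for $u \in \mathbb S^{n-1}$, $|u - u_0| < r_0$. Now for $x$ near $x_0$ I would decompose
\beqs
F(x) - F(x_0) = \bigl(|x|^{-\alpha} - |x_0|^{-\alpha}\bigr) F(x/|x|) + |x_0|^{-\alpha}\bigl(F(x/|x|) - F(x_0/|x_0|)\bigr).
\eeqs
For the first term, $t \mapsto t^{-\alpha}$ is locally Lipschitz on any compact subinterval of $(0,\infty)$ (its derivative is bounded there), so $||x|^{-\alpha} - |x_0|^{-\alpha}| \le C |x - x_0|$ for $x$ in a small ball around $x_0$ (which keeps $|x|$ bounded away from $0$ and $\infty$), while $F(x/|x|)$ stays bounded near $x_0$ by continuity on $\mathbb S^{n-1}$; this gives a bound $\lesssim |x - x_0|$. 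For the second term, I need $|x/|x| - x_0/|x_0|| \lesssim |x - x_0|$ for $x$ near $x_0$: the normalization map $x \mapsto x/|x|$ is $C^1$ (hence locally Lipschitz) on $\R^n \setminus \{0\}$, so this holds, and then the (HC) bound on the sphere yields $|F(x/|x|) - F(x_0/|x_0|)| \le C_0 |x/|x| - x_0/|x_0||^{\gamma_0} \lesssim |x - x_0|^{\gamma_0}$. Combining, $|F(x) - F(x_0)| \le C (|x-x_0| + |x-x_0|^{\gamma_0}) \le C' |x - x_0|^{\min\{1,\gamma_0\}}$ for $|x - x_0|$ small, which is exactly property (HC) at $x_0$ with exponent $\gamma = \min\{1, \gamma_0\}$.

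Parts \ref{HH4} and \ref{HH5} then follow quickly. For \ref{HH4}: by \ref{HH3}, $F$ has property (HC) on $\R^n \setminus \{0\}$; given $x_0$, let $y_0 = \varphi(x_0)$, pick the (HC) data $(r_1, C_1, \gamma_1)$ for $F$ at $y_0$ and $(r_2, C_2, \gamma_2)$ for $\varphi$ at $x_0$; for $x$ close enough to $x_0$, $|\varphi(x) - y_0| \le C_2|x - x_0|^{\gamma_2}$ is small (so the $F$-estimate applies), hence $|F(\varphi(x)) - F(\varphi(x_0))| \le C_1 |\varphi(x) - y_0|^{\gamma_1} \le C_1 C_2^{\gamma_1} |x - x_0|^{\gamma_1 \gamma_2}$, giving property (HC) with exponent $\gamma_1 \gamma_2$. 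For \ref{HH5}: the linear map $x \mapsto Kx$ is globally Lipschitz with constant $\|K\|$ and maps $\R^n \setminus \{0\}$ into itself since $K$ is invertible, so it has property (HC) on $\R^n \setminus \{0\}$ (with exponent $1$); apply \ref{HH4} with $\varphi(x) = Kx$.

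I expect the only real obstacle to be the careful bookkeeping in \ref{HH3} — specifically verifying the two local-Lipschitz facts ($t \mapsto t^{-\alpha}$ on compact subintervals of $(0,\infty)$, and $x \mapsto x/|x|$ near a nonzero point) and tracking that all the "smallness" conditions from the sphere's (HC) property can be met by shrinking the ball around $x_0$. Everything else is a routine composition argument.
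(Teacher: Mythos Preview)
Your proposal is correct and follows essentially the same approach as the paper: both use the homogeneity identity $F(x)=|x|^{-\alpha}F(x/|x|)$ together with a two-term telescoping decomposition and the fact that $x\mapsto x/|x|$ and $x\mapsto |x|^{-\alpha}$ are $C^1$ on $\R^n\setminus\{0\}$ to handle parts \ref{HH2}--\ref{HH3}, and then compose the (HC) estimates for \ref{HH4} and specialize to $\varphi(x)=Kx$ for \ref{HH5}. The only cosmetic difference is which factor you freeze in the telescoping (you freeze $|x_0|^{-\alpha}$, the paper freezes $F(\xi/|\xi|)$), which is immaterial.
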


The proof of Lemma \ref{Hcts} is given in the Appendix.

\begin{assumption}\label{Hcond}
The function $H$ belongs to $\mathcal H_{-\alpha}(\R^n,\R)$ for some $\alpha>0$, has property (HC) on the unit sphere  $\mathbb S^{n-1}$, and $H> 0$ on  $\mathbb S^{n-1}$.
\end{assumption}

If the function $H$ satisfies Assumption \ref{Hcond}, it is obvious that $H$ is continuous on $\mathbb S^{n-1}$. 
Therefore, thanks to  parts \ref{HH1} and \ref{HH2} of Lemma \ref{Hcts}, it is continuous and positive on $\R^n\setminus\{0\}$. Consequently, $H$ satisfies the conditions in Assumption \ref{assumpH}.
Some examples for the function $H$ will be  given in subsection \ref{egs}. 

The next result deals with a more general equation than \eqref{yReq}, namely, equation \eqref{mainode} below.

\begin{theorem}[Main Theorem I]\label{mainthm}
Let Assumptions \ref{assumpA} and  \ref{Hcond} hold.
Let $t_0,T_*\in\R$ be two given numbers with $T_*>t_0\ge 0$.
Assume $y\in C^1([t_0,T_*),\R^n)$ satisfies \eqref{nz}, \eqref{limzero}
and
\beq\label{mainode}
y'=-H(y)Ay +f(t) \text{ for all  } t \in (t_0,T_*),
\eeq
where  $f$ is a continuous function from $ [t_0,T_*)$ to $\R^n$ such that
\beq \label{frate}
|f(t)|\le M|y(t)|^{1-\alpha+\delta},\text{ for all $t\in [t_0,T_*)$ and some constants $M,\delta>0$.}
\eeq 
Then there exist an eigenvalue $\Lambda$ of $A$ and  an eigenvector $\xi_*$ of $A$ associated with $\Lambda$ such that
\beq\label{mainest}
|y(t)- (T_*-t)^{1/\alpha}\xi_*|=\bigo((T_*-t)^{1/\alpha+\varep})\text{ as $t\to T_*^-$ for some $\varep>0$.}
\eeq
More specifically, 
\beq\label{newIR}
|(I_n-R_\Lambda)y(t)|=\bigo((T_*-t)^{1/\alpha+\varep})\text{ as $t\to T_*^-$ for some $\varep>0$,}
\eeq
\beq\label{newRy}
| R_\Lambda y(t)- (T_*-t)^{1/\alpha}\xi_*|=\bigo((T_*-t)^{1/\alpha+\varep})\text{ as $t\to T_*^-$ for some $\varep>0$,}
\eeq
 and 
\beq\label{xiHA}
\alpha \Lambda H(\xi_*)=1.
\eeq
\end{theorem}

With a solution $y(t)$ as in Theorem \ref{mainthm}, we define,  for the sake of convenience, 
\beq \label{yTz}
y(T_*)=0, \text{ and have } y\in C([t_0,T_*],\R^n). 
\eeq 

From Theorem \ref{mainthm}, we can derive a corresponding result for equation \eqref{yReq}.

\begin{theorem}[Main Theorem II]\label{mainthm2}
Let Assumptions \ref{assumpA}, \ref{assumpG} (with a number $t_0\ge0$) and  \ref{Hcond} hold.
Given a number $T_*>t_0$. Assume $y\in C^1([t_0,T_*),\R^n)$ has properties \eqref{nz}, \eqref{limzero}, and
satisfies equation \eqref{yReq} for all $ t \in (t_0,T_*)$.
Then there exist an eigenvalue $\Lambda$ and an associated eigenvector $\xi_*$ of $A$ such that \eqref{mainest}--\eqref{xiHA} hold true.
\end{theorem}

The proofs of Theorems \ref{mainthm} and \ref{mainthm2} are given in Section \ref{gencase}.

\section{Existence of the extinction time}\label{extime}

We prove Theorem \ref{mustdie} in this section. 
First, we present a standard existence theorem for the solutions.

\begin{theorem}\label{existsoln}
Let Assumptions \ref{assumpA}, \ref{assumpH}  hold and assume $G$ is a continuous function from 
$[t_0,\infty) \times \R^n\setminus\{0\}$ to $\R^n$ for some number $t_0\in \R$.
Let  $y_0\in \R^n\setminus\{0\}$.  Then there exist an interval $[t_0,T_{\max})$, with $t_0<T_{\max}\le \infty$, and a function $y\in C^1([t_0,T_{\max}),\R^n\setminus\{0\})$ such that
\beq\label{ysat}
\text{ $y(t)$ that satisfies \eqref{yReq} on $(t_0,T_{\max})$, $y(t_0)=y_0$,}
\eeq  
and either
\begin{enumerate}[label=\rnum]
\item $T_{\max}=\infty$, or
\item\label{finT}  $T_{\max}<\infty$, and for any $\varep>0$, $y$ cannot be extended to a function of class $C^1([t_0,T_{\max}+\varep),\R^n\setminus\{0\})$ that satisties   \eqref{yReq} on the interval $(t_0,T_{\max}+\varep)$.
\end{enumerate}
Moreover, in the case \ref{finT}, it holds, for any compact set $U\subset \R^n\setminus\{0\}$, that
\beq\label{escape}
y(t)\not\in U\text{ when $t\in[t_0, T_{\max})$ is near $T_{\max}$.}
\eeq
\end{theorem}
\begin{proof}
 By  Peano's Existence Theorem \cite[Chapter II, Theorems 2.1, page 10]{Hartman1964} and the continuity of the function $G(t,x)$, there exist a number $\delta>0$ and a function $y\in C^1([t_0,t_0+\delta],\R^n\setminus\{0\})$ such $y(t)$  that satisfies equation  \eqref{yReq} on $[t_0,t_0+\delta]$ and $y(t_0)=y_0$.
By the virtue of the Extension Theorem, see \cite[Chapter II, Theorem 3.1, page 12]{Hartman1964} or  \cite[Chapter I, Theorem 2.1, page 17]{JHale78}, applied to this solution $y$ and the open set 
\beq\label{Dset} 
D=\left\{(t,x):t > t_0,x\in \R^n\setminus\{0\}\right \}\subset \R^{n+1},
\eeq
the current solution $y$ can be extended to a solution  $y\in C^1([t_0,T_{\max}),\R^n\setminus\{0\})$, for some $t_0+\delta\le T_{\max}\le \infty$, that has the  properties \eqref{ysat}, either (a) or (b), and, additionally, in the case (b), one has,
for any compact set $V\subset D$, 
\beq\label{escape2}
(t,y(t))\not\in V\text{ when $t\in[t_0, T_{\max})$ is near $T_{\max}$.}
\eeq

Now, consider case (b) and a compact set $U\subset \R^n\setminus\{0\}$. Let $V=[T_1,T_2]\times U$ with $T_1=(t_0+T_{\max})/2$ and $T_2=T_{\max}+1$. If  $t\in[t_0, T_{\max})$ is sufficiently close to $T_{\max}$, then $t\in [T_1,T_2]$. Therefore,  the desired statement \eqref{escape} follows from \eqref{escape2}.
\end{proof}

Note that such a solution $y(t)$ in Theorem \ref{existsoln} may not be unique. Next, we prove Theorem \ref{mustdie}.

\begin{proof}[Proof of Theorem \ref{mustdie}]
Let $y(t)$ be a solution of \eqref{yReq} as in Theorem \ref{existsoln} with the maximal interval $[t_0,T_{\max})$. Then 
\beq\label{nzmax}
y(t)\ne 0\text{ for all }t\in[t_0,T_{\max}).
\eeq
On $(t_0,T_{\max})$, we have
\beq\label{dya1}
\ddt (|y|^{\alpha})
=\alpha |y|^{\alpha-2}y'\cdot y
= \alpha \left(- |y|^{\alpha-2} H(y)(Ay)\cdot y + |y|^{\alpha-2} G(t,y)\cdot y\right).
\eeq

\medskip
\noindent\textit{Step 1.} Consider $A$ is symmetric first. Take $r_0>0$ such that 
$$2r_0\le r_*\text{ and } c_* (2r_0)^\delta \le  a_0\eqdef c_1\Lambda_1/2.$$

For $t>t_0$ sufficiently close to $t_0$, we have $|y(t)|<2r_0$. 
Let $[t_0,T)$ be the maximal interval in $[t_0,T_{\max})$ on which $|y(t)|<2r_0$.

Suppose $T<T_{\max}$. On the one hand, it must hold that
\beq\label{yT} 
|y(T)|=2r_0.\eeq 
 On the other hand, combining  \eqref{dya1} with \eqref{xAx}, \eqref{Hyal} and \eqref{Fcond},  we have, for $t\in(t_0,T)$, that
\beq\label{da0}
\ddt (|y|^{\alpha})
\le \alpha (- c_1\Lambda_1  + c_*|y|^\delta )\le \alpha (- c_1\Lambda_1  + c_*(2r_0)^\delta ) \le -\alpha a_0<0.
\eeq
Thus, $|y(T)|^\alpha\le |y_0|^\alpha$, which implies $|y(T)|\le |y_0|\le r_0$. This contradicts \eqref{yT}. Therefore, $T=T_{\max}$. 
For $t\in(t_0,T_{\max})$, integrating \eqref{da0} from $t_0$ to $t$ gives
\beq\label{ydies}
|y(t)|^\alpha\le |y_0|^\alpha-\alpha a_0 (t-t_0)  \text{ for all $t\in[t_0,T_{\max})$.}
\eeq

\medskip
\noindent\textit{Step 2.}
Consider the general matrix $A$. Using the equivalence \eqref{Adiag}, we set 
\beq \label{zSy}
z(t)=Sy(t)\text{ and }z_0=z(t_0)=Sy_0. 
\eeq 
Then
\beq\label{zeq0}
z'=-\widetilde H(z)A_0z+\widetilde G(t,z)\text{ for } t\in(t_0,T_{\max}),
\eeq
where 
\beqs
\widetilde H(z)=H(S^{-1}z),\quad 
\widetilde G(t,z)=SG(t,S^{-1}z) \text{ for $t\in[t_0,T_{\max})$ and $z\in\R^n\setminus\{0\}$.}
\eeqs

Note that
\beq\label{SSx}
\|S^{-1}\|^{-1}\cdot |x|\le |Sx|\le \|S\|\cdot|x| \text{ for all $x\in\R^n$.}
\eeq

Clearly, $\widetilde H$ satisfies the same condition as $H$ in Assumption \ref{assumpH}. 
Moreover, $\widetilde G(t,z)$ is continuous on $[t_0,\infty)\times (\R^n\setminus\{0\})$.
For $t\in[t_0,T_{\max})$ and $0<|z|\le r_*/\|S^{-1}\|$, we have $0<|S^{-1}z|\le r_*$, and then, by \eqref{Fcond} and \eqref{SSx},
\beqs
|\widetilde G(t,z)|\le \|S\| \cdot  c_*|S^{-1}z|^{1-\alpha+\delta}
\le c_* \|S\| \cdot 
\begin{cases}
\|S^{-1}\|^{1-\alpha+\delta} |z|^{1-\alpha+\delta},&\text{ if $1-\alpha+\delta\ge 0$,}\\
\|S\|^{-(1-\alpha+\delta)} |z|^{1-\alpha+\delta},&\text{ otherwise.}
\end{cases}
\eeqs

We apply the calculations in Step 1 to the solution $z(t)$ of \eqref{zeq0}. When $|y_0|>0$ is sufficiently small, we have $|z_0|>0$ is sufficiently small, and hence, similar to  estimate \eqref{ydies}, 
\beqs
|z(t)|^\alpha\le |z_0|^\alpha-\alpha \widetilde a_0 (t-t_0), \text{ for all $t\in[t_0,T_{\max})$ and some constant $ \widetilde a_0>0$.}
\eeqs
Therefore, 
\beq\label{yagain}
|y(t)|^\alpha\le \|S^{-1}\|^\alpha |z(t)|^\alpha 
\le \|S^{-1}\|^\alpha(|Sy_0|^\alpha-\alpha \widetilde a_0 (t-t_0)) \text{ for all $t\in[t_0,T_{\max})$.}
\eeq

\medskip
\noindent\textit{Step 3.}
If $T_{\max}=\infty$, then \eqref{yagain} implies that $|y(t)|^\alpha<0 $ for $t> t_0+|Sy_0|^\alpha/(\alpha \widetilde a_0)$, which is an obvious contradiction. Therefore,  $T_{\max}<\infty$. 
As a consequence of \eqref{yagain},  
\beq\label{yunib} 
|y(t)|\le R_0 \text{ on $[t_0,T_{\max})$, where $R_0=\|S^{-1}\|\cdot |Sy_0|>0$.}
\eeq

\medskip
\noindent\textit{Step 4.}
Let $T_*=T_{\max}$. Then \eqref{nzmax} implies \eqref{nz}.
For any $\varep>0$, let $U=\{x\in\R^n:  \varep\le |x|\le 2R_0\}$ in \eqref{escape}.
Taking into account \eqref{yunib}, one must have $|y(t)|<\varep$ when $t\in[t_0, T_*)$ is near $T_*$.
This proves  the zero limit in \eqref{limzero}.
\end{proof}

We remark that $y(t)$ may be zero for $t$ larger than the above  $T_{\max}$. However, this is excluded from our consideration of the set $D$ in \eqref{Dset}.  The reason is our sole focus on the finite extinction time and the solution before that time. 

\section{Preliminary estimates}\label{facts}
In this section, we prepare for the proof of Theorem \ref{mainthm} by obtaining preliminary estimates for the solution $y(t)$ of equation \eqref{mainode}. They even hold under a weaker condition than Assumption \ref{Hcond}. 

\begin{lemma}\label{estthm}
Let Assumptions \ref{assumpA} and \ref{assumpH} hold.
Given numbers $T_*>t_0\ge 0$, let $y\in C^1([t_0,T_*),\R^n)$ satisfy \eqref{nz}, \eqref{limzero}, \eqref{mainode}--\eqref{frate}.
Then there are positive constants  $C_1$ and $C_2$ such that 
 \beq\label{yplus}
 C_1(T_*-t)^{1/\alpha}
\le  |y(t)|\le 
 C_2(T_*-t)^{1/\alpha} \text{ for all $t\in[t_0,T_*]$.}
\eeq
 \end{lemma}
\begin{proof}
We prove \eqref{yplus}  for the case the matrix $A$  is symmetric first and then for $A$ not symmetric.

\medskip
\noindent\textit{Case 1.} Consider  $A$ is symmetric. 
For $t\in(t_0,T_*)$, we calculate, similarly to \eqref{dya1},
\beq\label{dy2}
\ddt (|y|^{\alpha})
= \alpha \left(- |y|^{\alpha-2} H(y)(Ay)\cdot y + |y|^{\alpha-2} f(t)\cdot y\right).
\eeq
Utilizing  \eqref{xAx}, \eqref{Hyal} and \eqref{frate}, one has 
\beqs
\alpha( -c_2 \Lambda_n  - M |y|^\delta)\le \ddt  (|y|^{\alpha})\le  \alpha( - c_1 \Lambda_1  +M|y|^\delta).
\eeqs

Let $a_1= c_1\Lambda_1/2$ and $a_2=c_2\Lambda_n+1$. Let $r_0>0$ be such that 
$M r_0^\delta= \min\{1, c_1\Lambda_1/2\}$. 

Thanks to \eqref{limzero}, there is $T\in(t_0,T_*)$ such  that $|y(t)|\le r_0$ on $(T,T_*)$. Hence, 
\beq\label{douineq} 
-\alpha a_2 \le \ddt(|y|^{\alpha})\le -\alpha a_1 \text{ on $(T,T_*)$.}
\eeq

For $t\in[T,T_*)$, integrating \eqref{douineq} from $t$ to $t'\in(t,T_*)$, passing to the limit $t'\to T_*^-$,  and using \eqref{limzero}, we obtain
\beq
 \label{yuplow}
\alpha a_1 (T_*-t)\le  |y(t)|^\alpha \le \alpha a_2 (T_*-t) \text{ for all $t\in [T,T_*]$.}
 \eeq
 Above, \eqref{yuplow} holds for $t=T_*$ thanks to \eqref{yTz}.
Note also that 
\beq\label{yTT}
0<a_3\eqdef \min_{t\in[t_0,T]} (T_*-t)^{-1/\alpha}|y(t)| \le a_4\eqdef \max_{t\in[t_0,T]} (T_*-t)^{-1/\alpha}|y(t)|<\infty.
\eeq
Combining \eqref{yuplow} with \eqref{yTT}, we obtain the desired estimates in \eqref{yplus} 
with 
$$C_1=\min\{(\alpha a_1)^{1/\alpha},a_3\}\text{ and }
C_2=\max\{(\alpha a_2)^{1/\alpha},a_4\}.$$

\medskip
\noindent\textit{Case 2.} Consider $A$ is not symmetric. Let $A=S^{-1}A_0 S$ as in \eqref{Adiag}.
Same as \eqref{zSy}, we set $z(t)=Sy(t)$ for $t\in[t_0,T_*]$.  
Then $z$ belongs to $C^1([t_0,T_*),\R^n\setminus\{0\})\cap C([t_0,T_*],\R^n)$, 
$z(t)\ne 0$ for all $t\in[t_0,T_*)$, $z(T_*)=0$, and 
\beq\label{zeq}
z'=-\widetilde H(z)A_0z+\widetilde{f}(t)\text{ for } t\in(t_0,T_*),
\eeq
where 
\beq\label{HRz}
\widetilde H(z)=H(S^{-1}z) \text{ for $z\in\R^n\setminus\{0\}$, and }
\widetilde {f}(t)=Sf(t) \text{ for $t\in[t_0,T_*)$.}
\eeq

One can verify, thanks to Assumption  \ref{assumpH}, that $\widetilde H$ belongs to $\mathcal H_{-\alpha}(\R^n,\R)$, and is positive and continuous on $\R^n\setminus\{0\}$.
Moreover, it is clear that the function $\widetilde f$ is continuous on $[t_0,T_*)$. Thanks to \eqref{frate} and \eqref{SSx}, it satisfies, for $t\in[t_0,T_*)$, 
\beq\label{fzMtil}
|\widetilde f(t)|\le \|S\|\cdot |f(t)|\le \|S\|  M |y(t)|^{1-\alpha+\delta}\le \widetilde M |z(t)|^{1-\alpha+\delta}, \\
\eeq
where
\beqs 
\widetilde M= 
\begin{cases} M \|S\|\cdot   \|S^{-1}\|^{1-\alpha+\delta},&\text{ if $1-\alpha+\delta\ge 0$,}\\
                      M \|S\|\cdot  \|S\|^{-(1-\alpha+\delta)}=M\|S\|^{\alpha-\delta},     &\text{ otherwise.}
\end{cases}
\eeqs
Therefore, we can apply the result in Case 1 to  the solution $z(t)$ and equation \eqref{zeq}. Then there exist two positive constants $C_1'$ and $C_2'$ such that 
\beq\label{zpower}
C_1'(T_*-t)^{1/\alpha}
\le  |z(t)|\le 
 C_2'(T_*-t)^{1/\alpha}
\text { for all $t\in [t_0,T_*]$.}
 \eeq
Combining \eqref{zpower} with the relations in \eqref{SSx}, we obtain the estimates in \eqref{yplus} for $y(t)$. 
\end{proof}

The following are two immediate consequences of Lemma \ref{estthm}.
\begin{enumerate}[label=\rnum]
\item By \eqref{Hyal} and \eqref{yplus}, we have, for all $t\in[t_0,T_*)$,
\beq\label{Hyt}
C_3 (T_*-t)^{-1}\le H(y(t)) \le C_4 (T_*-t)^{-1}, \text{ where $C_3=c_1C_2^{-\alpha}$ and $C_4=c_2C_1^{-\alpha}$.} 
\eeq

\item We also observe from \eqref{frate} and \eqref{yplus} that, for all $t\in [t_0,T_*)$,
\beq\label{fbound}
|f(t)| \le M(T_*-t)^{1/\alpha-1+\delta/\alpha}\cdot
\begin{cases}
C_2^{1-\alpha+\delta},&\text{ if $1-\alpha+\delta\ge 0$,}\\
C_1^{1-\alpha+\delta},&\text{ otherwise.}
\end{cases}
\eeq
\end{enumerate}

\section{Proof for a special case}\label{simeg}

Let $a$ be an arbitrarily  positive number. Consider equation \eqref{mainode} in the case 
\beq \label{AHsim}
A=I_n \text{ and } H(x)=a|x|^{-\alpha},
\eeq 
 that is, equation \eqref{mainode} becomes 
\beq\label{basicf}
y'=-a|y|^{-\alpha} y +f(t) \text{ for } t \in (t_0,T_*).
\eeq

Theorem \ref{mainthm} for this particular case is simply the following.

\begin{theorem}\label{simthm}
Given  numbers $T_*>t_0\ge 0$. Let $y\in C^1([t_0,T_*),\R^n)$ and $f\in C([t_0,T_*),\R^n)$ satisfy \eqref{nz}, \eqref{limzero}, \eqref{frate} and \eqref{basicf}.
Then there exists a  vector $\xi_*\in\R^n$ such that
\beq\label{axi1}
 |\xi_*|=(\alpha a)^{1/\alpha},
\eeq
and, as $t\to T_*^-$, 
\beq\label{yxi}
|y(t)- (T_*-t)^{1/\alpha}\xi_* |=\bigo((T_*-t)^{1/\alpha+\varep})\text{ for some $\varep>0$.}
\eeq
\end{theorem}
\begin{proof}
Regarding the function $f$ that satisfies \eqref{frate}, we observe, for any number $\delta'\in(0,\delta)$, that
\beq \label{smallrate}
|f(t)|\le M'|y(t)|^{1-\alpha+\delta'}\text{ for all $t\in [t_0,T_*)$,}
\eeq 
where
\beq\label{Mprime}
M'=M\max_{t\in[t_0,T_*]}|y(t)|^{\delta-\delta'}\in(0,\infty).
\eeq
Note that we used \eqref{yTz} in \eqref{Mprime}.
Because of property \eqref{smallrate}, we can assume that $\delta<\alpha$ in  \eqref{frate}.
 
With the matrix $A$ and function $H$ in \eqref{AHsim}, they certainly satisfy Assumptions \ref{assumpA} and  \ref{assumpH}. Then  Lemma \ref{estthm} applies and the estimates from above and below for $|y(t)|$ in \eqref{yplus}, and estimate \eqref{fbound} for $|f(t)|$ hold true.

For $t\in( t_0,T_*)$, we have from \eqref{dy2}  that
\beq\label{dy3}
\ddt(|y|^{\alpha})=-\alpha a +\alpha |y|^{\alpha-2}f(t)\cdot y.
\eeq
Integrating equation \eqref{dy3} from $t$ to $t'\in(t,T_*)$, passing to the limit $t'\to T_*^-$, and using \eqref{limzero} give 
\beq\label{ynorm}
|y(t)|^{\alpha}=\alpha a (T_*-t)+g(t), 
\text{ where } g(t)=-\alpha \int_t^{T_*} |y(\tau)|^{\alpha-2}f(\tau)\cdot y(\tau)\d\tau.
\eeq
Hence, for all $t\in[t_0,T_*)$, one has $\alpha a (T_*-t)+g(t)>0$.

Using the Cauchy--Schwarz inequality, \eqref{frate} and the upper bound of $|y(t)|$ in \eqref{yplus}, we estimate 
\begin{align*}
|g(t)|&\le \alpha \int_t^{T_*} |y(\tau)|^{\alpha-1}|f(\tau)|\d\tau
\le \alpha M\int_t^{T_*} |y(\tau)|^{\delta} \d\tau \\
&\le \alpha MC_2^\delta \int_t^{T_*} (T_*-\tau)^{\delta/\alpha}\d\tau.
\end{align*}
We obtain
\beq\label{gC}
|g(t)|\le C_3( T_*- t)^{1+\delta/\alpha}\text{ for all $t\in [t_0, T_*)$, 
where
 $C_3=\frac{\alpha MC_2^\delta}{1+\delta/\alpha}$.}
\eeq 

We consider equation \eqref{basicf} as a linear equation of $y$ with time-dependent coefficient $-a|y(t)|^{-\alpha}$ and forcing function $f(t)$. By the variation of constants formula, we solve for $y(t)$ explicitly as
\begin{align*}
y(t)&=e^{-J(t)}\left(y_0+\int_{t_0}^t e^{J(\tau)}f(\tau)\d\tau\right) \text{ for $t\in [t_0,T_*)$,}
\end{align*}
where
\beq\label{Iform}
J(t)=a\int_{t_0}^t |y(\tau)|^{-\alpha}\d\tau.
\eeq
Using \eqref{ynorm} in \eqref{Iform}, we rewrite $J(t)$ as 
\begin{align*}
J(t)=\int_{t_0}^t \frac{a}{a\alpha (T_*-\tau)+  g(\tau)}\d\tau=J_1(t)+J_2(t),
\end{align*}
where
\begin{align*}
J_1(t)&=\int_{t_0}^t \frac{1}{\alpha (T_*-\tau)}\d\tau
\text{ and }
J_2(t)=\int_{t_0}^t h(\tau) \d\tau,\\
\intertext{ with } 
h(\tau)&=\frac{ -g(\tau)}{\alpha (T_*-\tau)(a\alpha (T_*-\tau)+ g(\tau))}.
\end{align*}

Clearly,
\beqs
J_1(t)=-\frac1{\alpha}\ln (T_*-t)+\frac1\alpha\ln (T_*-t_0).
\eeqs
Therefore,
\beq\label{yJform}
y(t)=\frac{(T_*-t)^{1/\alpha}}{(T_*-t_0)^{1/\alpha}} e^{-J_2(t)}\left(y_0+(T_*-t_0)^{1/\alpha}\int_{t_0}^t \frac{e^{J_2(\tau)}}{(T_*-\tau)^{1/\alpha}}f(\tau)\d\tau\right).
\eeq

Consider the integrand $h(\tau)$ of $J_2(t)$. Taking into account the estimate of $|g(\tau)|$ in \eqref{gC}, 
we assert that, as $\tau\to T_*^-$,
\beq\label{htau}
|h(\tau)|=\bigo(|g(\tau)|(T_*-\tau)^{-2})=\bigo((T_*-\tau)^{-1+\delta/\alpha}). 
\eeq
Thus, 
\beq\label{J2lim} 
\lim_{t\to T_*^-}J_2(t)=\int_{t_0}^{T_*} h(\tau)\d\tau=J_*\in\R,
\eeq 
and
\beq\label{JJh1}
J_2(t)=J_*-h_1(t),\text{ where } h_1(t)=\int_t^{T_*}h(\tau)\d\tau\in \R.
\eeq
From estimate \eqref{htau}, it follows  that
\beq\label{h1o}
|h_1(t)|=\bigo((T_*-t)^{\delta/\alpha}) \text{ as $t\to T_*^-$.}
\eeq

Regarding the integral in  formula \eqref{yJform}, we have, thanks to estimate \eqref{fbound} of $|f(t)|$, that 
\beq\label{eJf}
\frac{|f(t) |}{(T_*-t)^{1/\alpha}}
=\bigo((T_*-t)^{-1+\delta/\alpha})  \text{ as $t\to T_*^-$.}
\eeq
Hence,
\beqs
\lim_{t\to T_*^-} \int_{t_0}^t\frac{e^{J_2(\tau)}}{(T_*-\tau)^{1/\alpha}}f(\tau)\d\tau
=\int_{t_0}^{T_*} \frac{e^{J_2(\tau)}}{(T_*-\tau)^{1/\alpha}}f(\tau)\d\tau
=\eta_*\in \R^n,
\eeqs
and 
\beq\label{ifeeta}
\int_{t_0}^t\frac{e^{J_2(\tau)}}{(T_*-\tau)^{1/\alpha}}f(\tau)\d\tau=\eta_*-\eta(t),
\eeq
where
\beqs
\eta(t)=\int_t^{T_*} \frac{e^{J_2(\tau)}}{(T_*-\tau)^{1/\alpha}}f(\tau)\d\tau\in \R^n.
\eeqs
It follows from \eqref{J2lim} and \eqref{eJf} that
\beq\label{etes}
|\eta(t)| =\bigo((T_*-t)^{\delta/\alpha}) \text{ as $t\to T_*^-$.}
\eeq

Combining \eqref{yJform}, \eqref{JJh1} and \eqref{ifeeta} gives
\beqs
y(t)=\frac{(T_*-t)^{1/\alpha}}{(T_*-t_0)^{1/\alpha}} e^{-J_*+h_1(t)}\left(y_0+(T_*-t_0)^{1/\alpha}(\eta_*-\eta(t))\right)
\text{ for $t\in[t_0, T_*)$.}
\eeqs
Then
\begin{align*}
&y(t)-(T_*-t)^{1/\alpha} e^{-J_*} \left(\frac{y_0}{(T_*-t_0)^{1/\alpha}}+\eta_*\right)\\
&=(T_*-t)^{1/\alpha} e^{-J_*} (e^{h_1(t)}-1)\left(\frac{y_0}{(T_*-t_0)^{1/\alpha}}+\eta_*\right)
 - (T_*-t)^{1/\alpha} e^{-J_*+h_1(t)}\eta(t).
\end{align*}

Let $\xi_*=e^{-J_*}( (T_*-t_0)^{-1/\alpha}y_0+\eta_*)\in\R^n$.
This expression and properties \eqref{h1o}, \eqref{etes} imply, as $t\to T_*^-$,
\begin{align*}
\left|y(t)-(T_*-t)^{1/\alpha} \xi_* \right|
&=\bigo\left((T_*-t)^{1/\alpha} (|e^{h_1(t)}-1|+|\eta(t)|)\right)\\
&=\bigo\left((T_*-t)^{1/\alpha} (|h_1(t)|+|\eta(t)|)\right),
\end{align*}
thus,
\beq \label{yy1}
\left|y(t)-(T_*-t)^{1/\alpha} \xi_* \right|
=\bigo((T_*-t)^{1/\alpha+\delta/\alpha}).
\eeq
Therefore, we obtain the desired estimate \eqref{yxi}. 
Because of the lower bound of $|y(t)|$ in \eqref{yplus}, the vector $\xi_*$ in \eqref{yxi} must be nonzeo. 

We prove property \eqref{axi1} now.
 By the triangle inequality and \eqref{yy1}, one has
  \beq\label{yxinorm1}
\big|(T_*-t)^{-1/\alpha} |y(t)|- |\xi_*| \big|=\bigo((T_*-t)^{\delta/\alpha}).
\eeq
From \eqref{ynorm},
\beq\label{yabs2}
(T_*-t)^{-1/\alpha}|y(t)|= \left(a\alpha+\frac{g(t)}{ T_*-t}\right)^{1/\alpha}.
\eeq
Taking into account estimate \eqref{gC} of $|g(t)|$,  we have from \eqref{yabs2} that, as $t\to T_*^-$,
\beq\label{yxinorm2}
\left| (T_*-t)^{-1/\alpha}|y(t)| -(a\alpha)^{1/\alpha}\right|
=\bigo\left( \frac{|g(t)|}{  T_*-t}\right)
=\bigo( (T_*-t)^{\delta/\alpha}).
\eeq  
From the  two asymptotic estimates \eqref{yxinorm1} and \eqref{yxinorm2}, one must have $|\xi_*|=(a\alpha)^{1/\alpha}$, which proves \eqref{axi1}.
The proof is complete.
\end{proof}
 
\begin{remark}
In the case dimension $n=1$, Theorem \ref{simthm} already proves Theorem \ref{mainthm} for any positive constant $A$ and positive function $H\in \mathcal H_{-\alpha}(\R,\R)$. We justify this fact below. 

Let $y(t)$ be the solution of \eqref{mainode} as in Theorem \ref{mainthm}. 
With $H\in \mathcal H_{-\alpha}(\R,\R)$, we have
$$H(x)=\begin{cases} |x|^{-\alpha}H(1), &  \text{ for $x>0$,}\\
                                   |x|^{-\alpha}H(-1),& \text{ for $x<0$.}
             \end{cases} $$
In general, $H(1)\ne H(-1)$, hence it appears that we do not have equation \eqref{basicf} yet.
However, for our continuous solution $y(t)\ne 0$ on $[t_0,T_*)$, we must have either $y(t)>0$ on $[t_0,T_*)$ or  $y(t)<0$ on $[t_0,T_*)$. Therefore, $y(t)$, in fact, satisfies \eqref{basicf} for all $t\in(t_0,T_*)$, with $a=AH(1)$ or $a=AH(-1)$. Then Theorem \ref{simthm} applies. (As a side note, because  $\mathbb S^0=\{-1,1\}$, property (HC) on $\mathbb S^0$  is automatically satisfied.)
\end{remark}

\section{Solutions when the matrix $A$ is symmetric}\label{symcase}

In this section, we assume  Assumptions \ref{assumpA} and \ref{assumpH} hold and, additionally, the matrix $A$ is symmetric.
Let numbers $T_*>t_0\ge 0$ be given, and let function  $y\in C^1([t_0,T_*),\R^n)$ satisfy \eqref{nz}, \eqref{limzero}, \eqref{mainode}--\eqref{frate}.
For $t\in [t_0,T_*)$, define
\beq\label{quot}
\lambda(t)=\frac{y(t)\cdot Ay(t)}{|y(t)|^2} \text{ and }   
 v(t)=\frac{y(t)}{|y(t)|} .
\eeq
(The quotient $\lambda(t)$ in \eqref{quot} imitates the Dirichlet quotient for the heat equations when $A$ is the negative Laplacian.)
Then $\lambda\in C^1([t_0,T_*),\R)$ and   $v\in C^1([t_0,T_*),\R^n)$. Moreover, one has, 
$|v(t)|=1$ and, thanks to \eqref{xAx},  
 \beq\label{lest}
 \Lambda_1 \le  \lambda(t)\le \Lambda_n\le \|A\| \text{ for all $t\in[ t_0,T_*)$.}
 \eeq 
 
\begin{proposition}\label{lem1}
One has  
$$\lim_{t\to T_*^-} \lambda(t)=\Lambda\in \sigma(A).$$
\end{proposition}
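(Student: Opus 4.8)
The strategy is to derive a differential equation (or inequality) for $\lambda(t)$ and show that $\lambda(t)$ converges, then argue the limit lies in $\sigma(A)$. First I would compute $\lambda'(t)$ using \eqref{mainode}. Writing $y' = -H(y)Ay + f(t)$, a direct differentiation of $\lambda = (y\cdot Ay)/|y|^2$ gives, after using symmetry of $A$,
\beqs
\lambda'(t) = \frac{2}{|y|^2}\Bigl( y\cdot A y' - \lambda\, y\cdot y'\Bigr)
= -\frac{2H(y)}{|y|^2}\Bigl(|Ay|^2 - \lambda\, y\cdot Ay\Bigr) + \frac{2}{|y|^2}\bigl(Ay - \lambda y\bigr)\cdot f(t).
\eeqs
Since $|Ay|^2 - \lambda\, y\cdot Ay = |Ay - \lambda y|^2$ (using $\lambda |y|^2 = y\cdot Ay$), this reads
\beqs
\lambda'(t) = -\frac{2H(y)}{|y|^2}\,|Ay - \lambda y|^2 + \frac{2}{|y|^2}(Ay-\lambda y)\cdot f(t).
\eeqs
The first term is $\le 0$; this is the monotonicity-producing term familiar from \cite{FS84a}. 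The plan is to show $\lambda'$ is, up to an integrable-in-time perturbation, nonpositive, so that $\lambda(t)$ has a limit as $t\to T_*^-$.

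The key quantitative input is \eqref{Hyt}, which gives $H(y(t)) \ge C_3(T_*-t)^{-1}$, and \eqref{yplus}, which gives $|y(t)|^{-2} \le C_1^{-2}(T_*-t)^{-2/\alpha}$, together with \eqref{fbound}, $|f(t)| = \bigo((T_*-t)^{1/\alpha - 1 + \delta/\alpha})$. The perturbation term is then bounded by
\beqs
\frac{2}{|y|^2}|Ay-\lambda y|\,|f(t)| \le \frac{2(\|A\|+\Lambda_n)|y|}{|y|^2}|f(t)| = \bigo\bigl((T_*-t)^{-1/\alpha}\bigr)\cdot\bigo\bigl((T_*-t)^{1/\alpha-1+\delta/\alpha}\bigr) = \bigo\bigl((T_*-t)^{-1+\delta/\alpha}\bigr),
\eeqs
which is integrable near $T_*$. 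Meanwhile the coefficient $H(y)/|y|^2$ in front of the good term is $\ge C_3 C_1^{-2}(T_*-t)^{-1-2/\alpha}$, a strong (non-integrable) weight. So I would write $\lambda(t) = \lambda(t_1) + \int_{t_1}^t \lambda'(\tau)d\tau$; the perturbation part of the integral converges, and the good part is monotone decreasing. Hence either $\int_{t_1}^{T_*}\frac{H(y)}{|y|^2}|Ay-\lambda y|^2 d\tau < \infty$ and $\lambda(t)$ converges to some $\Lambda \in [\Lambda_1,\Lambda_n]$ by \eqref{lest} and monotone-plus-integrable decomposition; the boundedness from \eqref{lest} rules out divergence to $-\infty$.

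It remains to identify $\Lambda \in \sigma(A)$. Suppose $\lambda(t)\to\Lambda \notin \sigma(A)$. Then $\dist(\Lambda, \sigma(A)) = 2\rho > 0$, so for $t$ near $T_*$ the matrix $A - \lambda(t)I_n$ is uniformly invertible and $|Ay - \lambda y| = |(A-\lambda I_n)y| \ge \rho|y|$. Plugging this lower bound into the good term, $\int_{t_1}^{T_*}\frac{H(y)}{|y|^2}|Ay-\lambda y|^2 d\tau \ge \rho^2 \int_{t_1}^{T_*} H(y(\tau))\,d\tau \ge \rho^2 C_3 \int_{t_1}^{T_*}(T_*-\tau)^{-1}d\tau = +\infty$, contradicting the finiteness just established. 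Therefore $\Lambda \in \sigma(A)$.

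\medskip
The main obstacle I anticipate is making the convergence argument clean: one must carefully split $\lambda' = (\text{nonpositive})+(\text{integrable})$ and invoke that a function which is a monotone term plus a function of bounded variation (here, with absolutely convergent integral) has a limit, while simultaneously using \eqref{lest} to prevent escape to $-\infty$. The algebraic identity $|Ay|^2 - \lambda(y\cdot Ay) = |Ay - \lambda y|^2$ (which hinges on $A=A^{\mathrm T}$ and the definition of $\lambda$) is the structural heart of the estimate, exactly as in the Foias--Saut Dirichlet-quotient method; everything else is bookkeeping with the already-established bounds \eqref{yplus}, \eqref{Hyt}, \eqref{fbound}.
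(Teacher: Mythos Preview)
Your proposal is correct and follows essentially the same approach as the paper: compute $\lambda'$, identify the nonpositive ``good'' term $-2H(y)|Av-\lambda v|^2$ and the integrable perturbation $h(t)=\bigo((T_*-t)^{-1+\delta/\alpha})$, and use boundedness of $\lambda$ to force convergence and finiteness of $\int H(y)|Av-\lambda v|^2\,\d\tau$. The only notable difference is in the last step: the paper extracts a sequence $t_j\to T_*$ with $|Av(t_j)-\lambda(t_j)v(t_j)|\to 0$ and passes to a subsequential limit of the unit vectors $v(t_j)$ to produce an eigenvector, whereas you argue directly that if $\Lambda\notin\sigma(A)$ then the uniform lower bound $|(A-\lambda(t)I_n)y|\ge \rho|y|$ forces $\int H(y)\,\d\tau=\infty$, a contradiction; your route is slightly more direct and avoids the compactness step.
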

\begin{proof}
For $t\in(t_0,T_*)$, we have
\beq\label{lameq}
\lambda'(t)
=\frac2{|y|^2} y' \cdot Ay-\frac{2(y\cdot Ay)}{|y|^4} y' \cdot y
=\frac2{|y|^2} y' \cdot (Ay-\lambda(t) y).
\eeq
By equation \eqref{mainode}, we write $y'$ as
\beqs
y' =-H(y)(Ay-\lambda(t) y) - \lambda(t) H(y)y+f(t),
\eeqs
and use it in \eqref{lameq} to obtain
\beqs
\lambda'(t)
=-\frac{2H(y)}{|y|^2} |Ay-\lambda(t) y|^2 - \frac{2\lambda(t) H(y)}{|y|^2} y\cdot (Ay-\lambda(t) y)+h(t),
\eeqs
where
\beqs
h(t)=\frac2{|y(t)|^2}f(t)\cdot(Ay(t)-\lambda(t) y(t)).
\eeqs
Because $ y(t)\cdot(Ay(t)-\lambda(t) y(t))=0$, it follows that 
\beq\label{lh}
\lambda'(t)
=-2H(y)|Av-\lambda(t) v|^2 +h(t).
\eeq

Using \eqref{frate}, \eqref{lest}, the fact $|v(t)|=1$,  and then  \eqref{yplus}, we estimate  
\beq\label{oh}
|h(t)|\le 4M\|A\|\cdot |y(t)|^{-\alpha+\delta}\le C_5 (T_*-t)^{-1+\delta/\alpha}
\text{ for all $t\in [t_0,T_*)$,}
\eeq
 where $C_5$ is $4M\|A\| C_2^{-\alpha+\delta}$ if $\delta\ge \alpha$, or
$4M\|A\| C_1^{-\alpha+\delta}$ otherwise. 

For $t,t'\in[t_0,T_*)$ with $t'>t$, integrating equation \eqref{lh} from $t$ to $t'$ gives
\beq\label{llH}
\lambda(t')-\lambda(t)+2\int_t^{t'}H(y(\tau))|Av(\tau)-\lambda(\tau) v(\tau)|^2\d\tau =\int_t^{t'}h(\tau)\d\tau.
\eeq
Thanks to \eqref{oh}, the last integral can be estimated as
\beq\label{hitt}
\left|\int_t^{t'}h(\tau)\d\tau\right|\le \frac{\alpha C_5}{\delta}(T_*-t)^{\delta/\alpha}.
\eeq
By taking the limit superior of \eqref{llH}, as $t'\to T_*^-$, we derive
\beq\label{llH2}
\limsup_{t'\to T_*^-}\lambda(t')\le \lambda(t)+\frac{\alpha C_5}{\delta}(T_*-t)^{\delta/\alpha}<\infty.
\eeq
Then taking the limit inferior of \eqref{llH2}, as $t\to T_*^-$, yields
\beqs
\limsup_{t'\to T_*^-}\lambda(t')\le \liminf_{t\to T_*^-}  \lambda(t).
\eeqs
This and \eqref{lest}  imply
\beq\label{limlam} 
\lim_{t\to T_*^-}\lambda(t)=\Lambda\in [\Lambda_1,\Lambda_n].
\eeq

It remains to be proved that  $\Lambda$ is an eigenvalue of $A$.
Using properties \eqref{hitt} and  \eqref{limlam} in \eqref{llH} and by the Cauchy criterion, as $t,t'\to T_*^-$, we obtain
\beq\label{Hfin}
\int_{t_0}^{T_*} H(y(\tau))|Av(\tau)-\lambda(\tau) v(\tau)|^2\d\tau<\infty.
\eeq

We claim that 
\beq \label{claim} 
\forall\varep\in(0,T_*-t_0),\exists t\in [T_*-\varep,T_*):|Av(t)-\lambda(t)v(t)|<\varep.
\eeq 
Indeed, suppose the claim \eqref{claim} is not true, then 
\beq\label{anticlaim} 
\exists\varep_0\in(0,T_*-t_0),\forall t\in [T_*-\varep_0,T_*): |Av(t)-\lambda(t)v(t)|\ge \varep_0.
\eeq
Combining \eqref{anticlaim} with property \eqref{Hyt}, we have
\beqs
\int_{T_*-\varep_0}^{T_*} H(y(\tau))|Av(\tau)-\lambda(\tau) v(\tau)|^2\d\tau
\ge \int_{T_*-\varep_0}^{T_*} C_3(T_*-\tau)^{-1} \varep_0^2 \d\tau=\infty,
\eeqs
which contradicts \eqref{Hfin}. Hence, the  claim \eqref{claim} is true. 

Thanks to \eqref{claim}, there exists a sequence $(t_j)_{j=1}^\infty\subset [t_0,T_*)$ such that  
\beq \label{Avj}
\lim_{j\to\infty}t_j= T_* 
\text{ and }
\lim_{j\to\infty}|Av(t_j)-\lambda(t_j)v(t_j)|= 0.
\eeq 
The first equation in \eqref{Avj} and \eqref{limlam} imply $\lambda(t_j)\to \Lambda$ as $j\to\infty$. 
Because $v(t_j)\in\mathbb S^{n-1}$ for all $j$, we can extract a subsequence $(v(t_{j_k}))_{k=1}^\infty$, such that $v(t_{j_k})\to \bar v\in\mathbb S^{n-1}$ as $k\to\infty$. 
Combining these limits with the second equation in \eqref{Avj} written with $j=j_k$ and $k\to\infty$ yields
$A\bar v=\Lambda\bar v$.
Therefore, $\Lambda$ is an eigenvalue of $A$.
\end{proof}

From here to the end of this section, $\Lambda$ is the eigenvalue in Proposition \ref{lem1}.

\begin{proposition}\label{lem2}
There is $\varep>0$ such that 
\beq\label{remv}
| (I_n-R_{\Lambda})v(t)|=\bigo((T_*-t)^{\varep}) \text{ as $t\to T_*^-$.}
 \eeq
\end{proposition}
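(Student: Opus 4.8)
The plan is to study the component of $v(t)$ orthogonal to the eigenspace $R_\Lambda(\R^n)$ and show it decays at a power rate. Write $w(t) = (I_n - R_\Lambda)v(t)$ and set $p(t) = |(I_n-R_\Lambda)y(t)|^2$, or more conveniently work directly with $|w(t)|^2 = |y(t)|^{-2}|(I_n-R_\Lambda)y(t)|^2$. First I would record that since $A$ is symmetric and $R_\Lambda$ is the orthogonal projection onto $\ker(A-\Lambda I_n)$, the complementary part $(I_n-R_\Lambda)(\R^n)$ is $A$-invariant and $A$ restricted to it has all eigenvalues in $\sigma(A)\setminus\{\Lambda\}$; hence there is a gap $\rho = \dist(\Lambda,\sigma(A)\setminus\{\Lambda\})>0$ and, for all $x$,
\beq\label{gapplan}
|(A-\Lambda I_n)(I_n-R_\Lambda)x|\ge \rho\,|(I_n-R_\Lambda)x|.
\eeq
Next I would differentiate $|(I_n-R_\Lambda)y|^2$. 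Using \eqref{mainode} and $(I_n-R_\Lambda)A = A(I_n-R_\Lambda)$, and subtracting a $\lambda(t)$ or $\Lambda$ multiple cleverly, one gets
$$
\ddt |(I_n-R_\Lambda)y|^2 = -2H(y)\,(I_n-R_\Lambda)y\cdot (A-\Lambda I_n)(I_n-R_\Lambda)y - 2(\lambda_? )H(y)|(I_n-R_\Lambda)y|^2 + (\text{$f$-term}),
$$
where the bookkeeping of the $\Lambda$ versus $\lambda(t)$ terms is exactly where I would be careful; the clean choice is to compare with $\Lambda$ and absorb $|\lambda(t)-\Lambda|\to 0$ (Proposition \ref{lem1}) into error terms. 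Combining with \eqref{gapplan} and $|(I_n-R_\Lambda)y|\le|y|$, the leading dissipative term is bounded above by $-2\rho H(y)|(I_n-R_\Lambda)y|^2$ up to lower-order pieces.

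The heart of the argument is then a differential-inequality/Gronwall estimate. Dividing by $|y|^2$ to pass to $|w(t)|^2$ and using $H(y(t))\sim C(T_*-t)^{-1}$ from \eqref{Hyt}, together with $\ddt|y|^{-2} = -2|y|^{-4}y\cdot y'$ controlled the same way, one should arrive at a scalar inequality of the form
\beq\label{diffineqplan}
\ddt |w(t)|^2 \le -\frac{2\rho C_3 - \text{o}(1)}{T_*-t}\,|w(t)|^2 + \text{(error)},
\eeq
where the error comes from the $f$-term (rate $(T_*-t)^{-1+\delta/\alpha}$ by \eqref{fbound}, \eqref{yplus}) and from the $|\lambda(t)-\Lambda|$ slack. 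Since the coefficient of $-|w|^2/(T_*-t)$ is, for $t$ near $T_*$, bounded below by some $2\varep_0>0$, integrating this linear inequality (an integrating factor $(T_*-t)^{-2\varep_0}$, noting $\int^{T_*}(T_*-\tau)^{2\varep_0-1}\,\d\tau<\infty$ and the error integrates against it) yields $|w(t)|^2 = \bigo((T_*-t)^{2\varep_0})$, possibly shrinking $\varep_0$ to accommodate the error exponent $\min\{2\varep_0,\delta/\alpha\}$. Taking $\varep$ to be half the resulting exponent gives \eqref{remv}.

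The main obstacle I anticipate is controlling the coefficient in \eqref{diffineqplan} uniformly: one needs the genuine spectral gap estimate \eqref{gapplan} to survive after dividing by $|y|^2$ and after replacing $H(y)$ by its lower bound $C_3(T_*-t)^{-1}$, and simultaneously needs $\lambda(t)\to\Lambda$ fast enough (or at least $\to 0$ suffices here since it only multiplies $|w|^2$) so that the "$\text{o}(1)$" in the numerator does not eat the gap before $t$ is close enough to $T_*$. A secondary subtlety is that $w(t)$ is the projection of the \emph{normalized} solution, so I must track the extra terms coming from $\ddt|y(t)|^{-2}$; these turn out to contribute only a harmless $+2\lambda(t)H(y)|w|^2$-type term with the right sign or an absorbable lower-order term. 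Once the differential inequality is set up correctly, the integration is routine. If a direct differential inequality for $|w|^2$ is awkward because of sign issues, the fallback is to estimate $\frac{d}{dt}\bigl(|(I_n-R_\Lambda)y|^2/|y|^2\bigr)$ via the quotient rule directly, using $y\cdot y' = -\lambda H(y)|y|^2 + f\cdot y$ and $(I_n-R_\Lambda)y\cdot (I_n-R_\Lambda)y' \le -\rho H(y)|(I_n-R_\Lambda)y|^2 + \cdots$, which produces the same \eqref{diffineqplan} after combining.
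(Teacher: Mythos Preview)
Your plan has a genuine gap that breaks the argument whenever $\Lambda$ is an interior eigenvalue of $A$. The spectral-gap inequality \eqref{gapplan} is a \emph{norm} estimate, $|(A-\Lambda I_n)(I_n-R_\Lambda)x|\ge \rho\,|(I_n-R_\Lambda)x|$, but what enters your differential inequality is the \emph{quadratic form}
\[
(I_n-R_\Lambda)y\cdot (A-\Lambda I_n)(I_n-R_\Lambda)y
=\sum_{\lambda_j\ne\Lambda}(\lambda_j-\Lambda)\,|R_{\lambda_j}y|^2,
\]
which is indefinite: the summands with $\lambda_j<\Lambda$ carry the wrong sign. Consequently your claimed bound ``the leading dissipative term is bounded above by $-2\rho H(y)|(I_n-R_\Lambda)y|^2$'' is false in general, and no forward Gronwall argument on $|w(t)|^2=|(I_n-R_\Lambda)v(t)|^2$ can succeed. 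Your fallback via the quotient rule produces exactly the same indefinite term and so fails for the same reason.

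The paper avoids this by treating each eigencomponent $R_{\lambda_j}v$, $\lambda_j\ne\Lambda$, separately. For $\lambda_j>\Lambda$ the coefficient $-(\lambda_j-\lambda)H(y)$ is eventually $\le -\tfrac{\mu}{2}H(y)$ and a forward Gronwall gives $|R_{\lambda_j}v(t)|=\bigo((T_*-t)^{\theta\mu/2})$. For $\lambda_j<\Lambda$ the sign is reversed, so the paper runs the inequality \emph{backward}: one integrates from $\bar t$ to $t$, lets $t\to T_*^-$, and uses that the integrating factor $e^{-\mu\int H(y)}$ tends to $0$ while $|R_{\lambda_j}v|\le 1$ stays bounded, forcing $|R_{\lambda_j}v(\bar t)|=\bigo((T_*-\bar t)^{\delta/\alpha})$. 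Only after obtaining decay for each piece does one sum to get \eqref{remv}. You need this case split (or an equivalent device that separates the stable and unstable directions of $A-\Lambda I_n$ on $(I_n-R_\Lambda)(\R^n)$); a single energy inequality on the whole complement cannot see the difference.
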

\begin{proof}
If $\sigma(A)=\{\Lambda\}$, then $R_\Lambda={\rm Id}$ and \eqref{remv} is true.
Consider the case $\sigma(A)\ne \{\Lambda\}$.
We calculate
\beqs
 v' =\frac1{|y|}y' -\frac1{|y|^3}(y' \cdot y)y
 =-\frac{H(y)}{|y|}Ay+\frac1{|y|}f(t)
+\frac{H(y)(Ay)\cdot y}{|y|^3}y
 -\frac{f(t)\cdot y}{|y|^3}y.
\eeqs

Define the function $g:[t_0,T_*)\to\R^n$  by
\beqs
g(t)=\frac{1}{|y(t)|}f(t) -\frac{f(t)\cdot y(t)}{|y(t)|^3}y(t).
\eeqs
Then we have
\beq \label{dv}
 v'=-H(y)(Av-\lambda(t) v)+g(t)\text{ for all $t\in (t_0,T_*)$.}
\eeq 
Using property \eqref{frate} of $f(t)$, one can estimate
\beq\label{gMy}
|g(t)|\le 2 M |y(t)|^{-\alpha+\delta}\text{ for all $t\in[t_0,T_*)$.}
\eeq

Let $\lambda_j\in\sigma(A)\setminus\{\Lambda\}$.
Applying $R_{\lambda_j}$ to equation \eqref{dv} and taking the dot product with $R_{\lambda_j}v$ yield
\beq\label{Rnorm}
\frac12 \ddt |R_{\lambda_j}v|^2
 =-H(y)(\lambda_j-\lambda(t))| R_{\lambda_j}v|^2 +R_{\lambda_j} g(t)\cdot R_{\lambda_j}v.
\eeq

Set 
\beq \label{mudef}
\mu=\min \{ |\lambda_j-\Lambda| : 1\le j\le d, \lambda_j\ne  \Lambda\}>0.
\eeq

Applying  Cauchy--Schwarz's inequality, inequality \eqref{Rcontract} to $|R_{\lambda_j}g(t)|$,  estimate \eqref{gMy} for $|g(t)|$, and then Cauchy's inequality, we have
\beqs
|R_{\lambda_j}g(t)\cdot R_{\lambda_j}v|
\le 2M  |y|^{-\alpha+\delta}|R_{\lambda_j}v|
\le \frac\mu4 H(y)| R_{\lambda_j}v|^2 + \frac{4M^2|y|^{-2\alpha+2\delta}}{\mu H(y)}.
\eeqs
Using the first inequality of \eqref{Hyal} to estimate the last $H(y)$ gives 
\beqs
|R_{\lambda_j}g(t)\cdot R_{\lambda_j}v|
\le \frac\mu4 H(y)| R_{\lambda_j}v|^2 + \frac{4M^2}{\mu c_1}|y|^{-\alpha+2\delta}.
\eeqs
Utilizing the estimates in \eqref{yplus} for the norm $|y(t)|$, we obtain, for $t\in [t_0,T_*)$,
\beq\label{abso}
|R_{\lambda_j}g(t)\cdot R_{\lambda_j}v|
\le \frac\mu4 H(y)| R_{\lambda_j}v|^2 +\frac{C_6}2 (T_*-t)^{-1+2\delta/\alpha},
\eeq
where
\beqs
C_6=\frac{8M^2}{\mu c_1}\cdot
\begin{cases} 
   C_2^{-\alpha+2\delta},&\text{ if $\delta\ge \alpha/2$,}\\
   C_1^{-\alpha+2\delta},&\text{ otherwise. }
\end{cases}
\eeqs

Below, $T\in (t_0,T_*)$ is fixed and can be taken sufficiently close to $T_*$ such that
\beq\label{Tlam}
|\lambda(t)-\Lambda|\le \frac\mu4 \text{ for all $t\in[T,T_*)$.}
\eeq

\medskip
\noindent\underline{\textit{Case $\lambda_j>\Lambda$.}} In this case, combining \eqref{Rnorm} and \eqref{abso} yields, for $t\in (t_0,T_*)$,
\begin{align*}
\frac12 \ddt |R_{\lambda_j}v|^2
 &\le -(\lambda_j-\lambda(t)-\frac\mu4) H(y)| R_{\lambda_j}v|^2 +\frac{C_6}2 (T_*-t)^{-1+2\delta/\alpha} .
\end{align*}

By definition \eqref{mudef} of $\mu$ and the choice \eqref{Tlam}, one has,  for all $t\in [T,T_*)$,
\beq\label{lamu} 
\lambda_j-\lambda(t)-\frac\mu4= (\lambda_j-\Lambda)+(\Lambda-\lambda(t))-\frac\mu4
\ge \mu-\frac\mu4-\frac\mu4=\frac{\mu}2.
\eeq
Thus, for $t\in [T,T_*)$,
\beq\label{dRlarge}
 \ddt |R_{\lambda_j}v|^2
 \le - \mu H(y)| R_{\lambda_j}v|^2 +C_6(T_*-t)^{-1+2\delta/\alpha}.
\eeq

Let $t$ and $\bar t$ be any numbers in $[T,T_*)$ with  $t>\bar t$.
 It follows from  \eqref{dRlarge} that 
\beq\label{RHint}
  |R_{\lambda_j}v(t)|^2
 \le e^{-\mu\int_{\bar t}^t H(y(\tau))\d\tau} | R_{\lambda_j}v(\bar t)|^2 + C_6\int_{\bar t}^t e^{-\mu\int_{\tau}^t H(y(s))\d s}(T_*-\tau)^{-1+2\delta/\alpha}\d\tau .
\eeq
With $C_3$ being the positive constant in \eqref{Hyt}, we fix  a number $\theta>0$ such that 
$$\theta\le C_3\text{ and } \theta\mu < 2\delta/\alpha.$$ 
Then 
\beq \label{Hthe}
H(y(t))\ge \theta (T_*-t)^{-1}\text{ for all $t\in [ t_0,T_*)$.}
\eeq 
Utilizing this estimate in \eqref{RHint} gives
\begin{align*}
 & |R_{\lambda_j}v(t)|^2
 \le e^{-\theta\mu\int_{\bar t}^t (T_*-\tau)^{-1}\d\tau} | R_{\lambda_j}v(\bar t)|^2 
 + C_6\int_{\bar t}^t e^{-\theta\mu\int_{\tau}^t (T_*-s)^{-1}\d s}(T_*-\tau)^{-1+2\delta/\alpha}\d\tau \\
  &= \frac{(T_*-t)^{\theta\mu}}{(T_*-\bar t)^{\theta\mu}} | R_{\lambda_j}v(\bar t)|^2 
  + C_6(T_*-t)^{\theta\mu} \int_{\bar t}^t (T_*-\tau)^{-1+2\delta/\alpha-\theta\mu}\d\tau \\
  &= \frac{(T_*-t)^{\theta\mu}}{(T_*-\bar t)^{\theta\mu}} | R_{\lambda_j}v(\bar t)|^2 
    +\frac{C_6(T_*-t)^{\theta\mu}}{2\delta/\alpha-\theta\mu}\left(  (T_*-\bar t)^{2\delta/\alpha-\theta\mu}-(T_*-t)^{2\delta/\alpha-\theta\mu}\right).
\end{align*}
Therefore,
\beq\label{Rlamv1}
  |R_{\lambda_j}v(t)|^2
    \le \left(\frac{| R_{\lambda_j}v(\bar t)|^2}{(T_*-\bar t)^{\theta\mu}}  
    +\frac{C_6(T_*-\bar t)^{2\delta/\alpha-\theta\mu}}{2\delta/\alpha-\theta\mu}\right)(T_*-t)^{\theta\mu} .
\eeq
Having $\bar t=T$ in \eqref{Rlamv1}, we obtain
\beq \label{Rv1}
|R_{\lambda_j}v(t)|=\bigo((T_*-t)^{\theta\mu/2}) \text{ as $t\to T_*^-$.}
\eeq 

\medskip
\noindent\underline{\textit{Case $\lambda_j<\Lambda$.}} Using \eqref{abso} to have a lower bound for  the last term in \eqref{Rnorm}, we have
\begin{align*}
\frac12 \ddt |R_{\lambda_j}v|^2
 &\ge (\lambda(t)-\lambda_j-\frac\mu4) H(y)| R_{\lambda_j}v|^2 -\frac{C_6}2 (T_*-t)^{-1+2\delta/\alpha} .
\end{align*}

Same as \eqref{lamu}, one has, for  $t\in [T,T_*)$, 
$$\lambda(t)- \lambda_j -\frac\mu4= (\lambda(t)-  \Lambda)+(\Lambda-\lambda_j)-\frac\mu4
\ge -\frac\mu4 +\mu -\frac{\mu}4=\frac{\mu}2.$$ 
Hence,
\begin{align*}
\ddt |R_{\lambda_j}v|^2
 &\ge \mu H(y)| R_{\lambda_j}v|^2 -C_6(T_*-t)^{-1+2\delta/\alpha} .
\end{align*}
Then, for any $t,\bar t\in  [T,T_*)$ with $t>\bar t$, one has
\beq\label{eHR}
e^{-\mu\int_{\bar t}^t H(y(\tau))\d\tau}  |R_{\lambda_j}v(t)|^2-  |R_{\lambda_j}v(\bar t)|^2
 \ge -C_6 \int_{\bar t}^t e^{-\mu\int_{\bar t}^\tau H(y(s))\d s}(T_*-\tau)^{-1+2\delta/\alpha} \d\tau.
\eeq

Note from \eqref{Hthe} that $\int_{\bar t}^{T_*} H(y(\tau))\d\tau=\infty$, and from \eqref{Rcontract} that $ |R_{\lambda_j}v(t)|\le |v(t)|=1$. Then
\beqs
\lim_{t\to T_*^-} e^{-\mu\int_{\bar t}^t H(y(\tau))\d\tau}  |R_{\lambda_j}v(t)|^2=0.
\eeqs
Letting $t\to T_*^-$ in \eqref{eHR} and using \eqref{Hthe}  yield
\begin{align*}
|R_{\lambda_j}v(\bar t)|^2 
 &\le C_6 \int_{\bar t}^{T_*} e^{-\mu\int_{\bar t}^\tau H(y(s))\d s}(T_*-\tau)^{-1+2\delta/\alpha} \d\tau\\
&\le C_6 \int_{\bar t}^{T_*} \frac{(T_*-\tau)^{\theta\mu}}{(T_*-\bar t)^{\theta\mu}}(T_*-\tau)^{-1+2\delta/\alpha} \d\tau
=  \frac{C_6}{\theta\mu+2\delta/\alpha} (T_*-\bar t)^{2\delta/\alpha}.
\end{align*}
Therefore, we obtain
\beq\label{Rv2} 
|R_{\lambda_j}v(\bar t)| =\bigo((T_*-\bar t)^{\delta/\alpha})\text{ as $\bar t\to T_*^-$.}
\eeq 

\medskip
We estimate $|(I_n-R_{\Lambda})v(t)|$ now.
We have 
\beq\label{InR} 
|(I_n-R_{\Lambda})v(t)|=\Big|\sum_{1\le j\le d, \lambda_j\ne  \Lambda } R_{\lambda_j}v(t)\Big|
\le \sum_{1\le j\le d, \lambda_j\ne  \Lambda } |R_{\lambda_j}v(t)|.
\eeq
In the last sum in \eqref{InR}, we estimate $|R_{\lambda_j}v(t)|$ for all  $\lambda_j>\Lambda$ by \eqref{Rv1}, and  estimate  $|R_{\lambda_j}v(t)|$  for all   $\lambda_j<\Lambda$ by \eqref{Rv2}.
This results in the desired estimate   \eqref{remv} for $|(I_n-R_{\Lambda})v|$, with $\varep=\min\{\theta\mu/2,\delta/\alpha\}=\theta\mu/2$.
\end{proof}

We derive from Proposition \ref{lem2} more specific estimates for $y(t)$.
Let $\varep>0$ be as in Proposition \ref{lem2}.  On the one hand, we have \beqs
|(I_n-R_\Lambda)y(t)|=|y(t)|\cdot  |(I_n-R_\Lambda)v(t)|.
\eeqs
Together with \eqref{yplus} and \eqref{remv}, it yields
\beq\label{remy}
|(I_n-R_\Lambda)y(t)|=\bigo((T_*-t)^{1/\alpha+\varep})\text{ as $t\to T_*^-$.}
\eeq
On the other hand, by the triangle inequality and \eqref{yplus}, one has
\begin{align*}
|R_\Lambda y(t)|&\le |y(t)|+|(I_n-R_\Lambda)y(t)|\le C_2 (T_*-t)^{1/\alpha}+|(I_n-R_\Lambda)y(t)|,\\
|R_\Lambda y(t)|&\ge |y(t)|-|(I_n-R_\Lambda)y(t)|\ge  C_1 (T_*-t)^{1/\alpha}-|(I_n-R_\Lambda)y(t)|.
\end{align*}
Combining these inequalities with estimate \eqref{remy} for $|(I_n-R_\Lambda)y(t)|$, we deduce that
there exist numbers $T_0\in[t_0,T_*)$ and $C_7,C_8>0$ such that
\beq\label{RLy}
C_7 (T_*-t)^{1/\alpha}\le |R_\Lambda y(t)|\le C_8 (T_*-t)^{1/\alpha} \text{ for all $t\in[T_0,T_*)$.}
\eeq

\begin{proposition}\label{lem4}
There exists a unit vector $v_*\in\R^n$ such that
\beq\label{RLv}
|R_{\Lambda}v(t)-v_*|=\bigo((T_*-t)^{\varep})\text{ as $t\to T_*^-$ for some $\varep>0$.}
\eeq
\end{proposition}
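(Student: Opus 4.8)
The plan is to show that $R_\Lambda v(t)$ is Cauchy as $t\to T_*^-$, with an explicit polynomial rate, which immediately gives the limit $v_*$ and the estimate \eqref{RLv}. Set $w(t) = R_\Lambda v(t)$; the key point is that on the eigenspace $R_\Lambda(\R^n)$ the "rotation" term $Av - \lambda v$ lives almost entirely in the complementary directions, so $w$ is essentially only acted on by a scalar, and the error we need to control is quadratic in the small quantity $|(I_n - R_\Lambda)v(t)|$ that Proposition \ref{lem2} already bounds by $\bigo((T_*-t)^\varep)$.

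Concretely, I would first apply $R_\Lambda$ to equation \eqref{dv}, using $R_\Lambda A = A R_\Lambda = \Lambda R_\Lambda$ from \eqref{Pc}, to obtain
\beqs
w'(t) = -H(y(t))\bigl(\Lambda - \lambda(t)\bigr) w(t) + R_\Lambda g(t).
\eeqs
The scalar prefactor $H(y)(\Lambda - \lambda)$ is small: by \eqref{Hyt}, $H(y(t)) = \bigo((T_*-t)^{-1})$, and I claim $|\Lambda - \lambda(t)| = \bigo((T_*-t)^{2\varep})$ (or at least $\bigo((T_*-t)^{\varep'})$ for some $\varep' > 0$). This follows because $\lambda = v\cdot Av$ and, decomposing $v = R_\Lambda v + (I_n - R_\Lambda)v$ with the cross terms vanishing by orthogonality of the projections (the symmetric case, \eqref{Rcontract}), one gets $\lambda - \Lambda = \sum_{\lambda_j \ne \Lambda}(\lambda_j - \Lambda)|R_{\lambda_j}v|^2$, which is $\bigo((T_*-t)^{2\varep})$ by \eqref{remv}. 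Hence $|H(y(t))(\Lambda - \lambda(t))| = \bigo((T_*-t)^{-1+2\varep})$, which is integrable near $T_*$. Similarly, $R_\Lambda g(t)$ is controlled by \eqref{gMy} and \eqref{yplus}: $|g(t)| \le 2M|y(t)|^{-\alpha+\delta} = \bigo((T_*-t)^{-1+\delta/\alpha})$, again integrable near $T_*$.

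With both the coefficient and the forcing term integrable on $[t_0, T_*)$, the solution formula for the (scalar-coefficient, vector-valued) linear ODE for $w$ shows $w(t)$ converges as $t\to T_*^-$ to some vector $v_* := \lim_{t\to T_*^-} R_\Lambda v(t)$, and that $|w(t) - v_*| = \bigo\bigl(\int_t^{T_*}[\text{coefficient} + \text{forcing}]\,d\tau\bigr) = \bigo((T_*-t)^{\varep''})$ with $\varep'' = \min\{2\varep, \delta/\alpha\} > 0$; relabel this as $\varep$ for the statement. Finally, $v_*$ is a unit vector: from $|v(t)| = 1$ and $|(I_n - R_\Lambda)v(t)| \to 0$ we get $|R_\Lambda v(t)| \to 1$, so $|v_*| = 1$. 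I expect the main obstacle to be nailing down the rate for $|\Lambda - \lambda(t)|$ cleanly — one must be careful that the estimate \eqref{remv} only gives a rate for $(I_n - R_\Lambda)v$, and squaring it is exactly what makes the coefficient integrable, so the bookkeeping of exponents ($2\varep$ vs. $\delta/\alpha$, and whether $\theta\mu$ beats $\delta/\alpha$) needs to be done with some care; everything else is a routine Gronwall/variation-of-constants argument like the one already carried out in the proof of Theorem \ref{simthm}.
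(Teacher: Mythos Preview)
Your proposal is correct and, in fact, takes a cleaner route than the paper's own proof. The key difference is your observation that $\lambda(t)-\Lambda=\sum_{\lambda_j\ne\Lambda}(\lambda_j-\Lambda)|R_{\lambda_j}v(t)|^2$, which in the symmetric case gives the sharp rate $|\lambda(t)-\Lambda|=\bigo\bigl(|(I_n-R_\Lambda)v(t)|^2\bigr)=\bigo((T_*-t)^{2\varep})$ directly from \eqref{remv}. This makes $H(y)(\Lambda-\lambda)$ integrable near $T_*$, so the variation-of-constants formula for $w=R_\Lambda v$ immediately yields convergence and the rate. The paper does \emph{not} exploit this quadratic gain: it only uses Proposition~\ref{lem1} to get $|\lambda(t)-\Lambda|\le\varep_1$ for a small constant $\varep_1$, so that $H(y)(\Lambda-\lambda)$ is merely $\bigo((T_*-t)^{-1})$ and not obviously integrable. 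To compensate, the paper first derives the scalar equation for $|R_\Lambda v|$, solves it, and uses $|R_\Lambda v(t)|\to 1$ (from \eqref{remv}) together with logarithms to show that $\int_t^{T_*}H(y(\tau))(\Lambda-\lambda(\tau))\,\d\tau$ converges with a polynomial rate; only then does it return to the vector equation for $R_\Lambda v$. Your argument short-circuits this detour entirely. What the paper's approach buys is that it would still go through with only qualitative convergence $\lambda(t)\to\Lambda$ from Proposition~\ref{lem1}, whereas yours needs the quantitative input from Proposition~\ref{lem2}; but since Proposition~\ref{lem2} is already available at this point, your version is the more economical one.
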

\begin{proof}
Let $\varep_0>0$ be such that \eqref{remv} holds for $\varep=\varep_0$.
Then one has
\beq\label{oneR}
\big| 1-|R_{\Lambda}v(t)| \big|
=\big| |v(t)|-|R_{\Lambda}v(t)|\big|\le |v(t)-R_{\Lambda}v(t)|=\bigo((T_*-t)^{\varep_0}).
\eeq

Let $T_0$ be as in \eqref{RLy}. Recall that $C_4$ is the positive constant in \eqref{Hyt}.
We  fix a number $\varep_1>0$  such that
\beq\label{cep1}
C_4\varep_1<\delta/\alpha.
\eeq

Thanks to Proposition \ref{lem1},  there is $T\in [T_0,T_*)$  such that 
\beqs
|\lambda(t)-\Lambda|\le \varep_1 \text{ for all $t\in[T,T_*)$.}
\eeqs

Note from \eqref{RLy} that $R_{\Lambda}v(t)\ne 0$ for all $t\in[T,T_*)$.
Applying $R_\Lambda$ to equation \eqref{dv} yields, for  $t\in(t_0,T_*)$,
\beq \label{dRv}
\ddt R_{\Lambda}v
 =-H(y)(\Lambda-\lambda(t)) R_{\Lambda}v +R_\Lambda g(t).
\eeq
Then,  for $t\in[T,T_*)$,
\beq\label{RLveq}
\ddt |R_{\Lambda}v|
=\frac{1}{|R_{\Lambda}v|} \left(\ddt R_{\Lambda}v\right)\cdot R_{\Lambda}v
 =-H(y)(\Lambda-\lambda(t))| R_{\Lambda}v| +g_1(t),
\eeq
where
\beqs
g_1(t)=\frac{R_\Lambda  g(t)\cdot R_{\Lambda}v(t)}{|R_{\Lambda}v(t)|}.
\eeqs

Solving for solution $ |R_{\Lambda}v(t)|$ by the variation of constants formula from the  differential equation \eqref{RLveq} gives, for $\bar t,t\in[T,T_*)$ with $t>\bar t$, 
\beqs
 |R_{\Lambda}v(t)|=e^{-\int_{\bar t}^t H(y(\tau))(\Lambda-\lambda(\tau))\d\tau } \left(| R_{\Lambda}v(\bar t) |
 + \int_{\bar t}^t e^{\int_{\bar t}^\tau H(y(s))(\Lambda-\lambda(s))\d s}  g_1(\tau) \d\tau \right).
\eeqs
It yields
\beq \label{hl0}
\begin{aligned}
&\int_{\bar t}^t H(y(\tau))(\Lambda-\lambda(\tau))\d\tau\\
&=\ln \left(| R_{\Lambda}v(\bar t) |
 + \int_{\bar t}^t e^{\int_{\bar t}^\tau H(y(s))(\Lambda-\lambda(s))\d s}  g_1(\tau) \d\tau \right)
   - \ln |R_{\Lambda}v(t)|.
\end{aligned}
\eeq

We have from \eqref{Rcontract}, \eqref{yplus} and \eqref{gMy} that
\beq\label{gg}
|g_1(t)|\le|R_\Lambda  g(t)| \le  |g(t)|\le C_9 (T_*-t)^{-1+\delta/\alpha} \text{ for all $t\in [T,T_*)$,}
\eeq
where $C_9$ is $2MC_2^{-\alpha+\delta}$ if $\delta\ge \alpha$, and is $2MC_1^{-\alpha+\delta}$ otherwise.
By \eqref{gg} and \eqref{Hyt}, we have, for $\tau\in[\bar t,T_*)$,
\begin{align*}
e^{\int_{\bar t}^\tau H(y(s))(\Lambda-\lambda(s))\d s} | g_1(\tau) |
&\le e^{\int_{\bar t}^\tau C_4\varep_1 (T_*-s)^{-1}\d s} C_9(T_*-\tau)^{-1+\delta/\alpha}\\
&=C_9(T_*-\bar t)^{C_4\varep_1} (T_*-\tau)^{-1+\delta/\alpha-C_4\varep_1}.
\end{align*}
Thanks to this and \eqref{cep1}, 
\beq\label{eta0}
\begin{aligned}
\lim_{t\to T_*^-}\int_{\bar t}^t e^{\int_{\bar t}^\tau H(y(s))(\Lambda-\lambda(s))\d s}  g_1(\tau) \d\tau
&=\int_{\bar t}^{T_*} e^{\int_{\bar t}^\tau H(y(s))(\Lambda-\lambda(s))\d s}  g_1(\tau) \d\tau\\
&=\eta(\bar t)\in \R.
\end{aligned}
\eeq
Note that 
\beq\label{eta}
|\eta(\bar t)|\le  C_9(T_*-\bar t)^{C_4\varep_1} \int_{\bar t}^{T_*} (T_*-\tau)^{-1+\delta/\alpha-C_4\varep_1}\d\tau
=\frac{C_9}{\delta/\alpha-C_4\varep_1} (T_*-\bar t)^{\delta/\alpha}.
\eeq

Passing to the limit as $t\to T_*^-$ in \eqref{hl0}, and using the fact $|R_\Lambda v(t)|\to 1$, thanks to \eqref{oneR}, we have
\beq\label{hl1}
\int_{\bar t}^{T_*} H(y(\tau))(\Lambda-\lambda(\tau))\d\tau= \ln (| R_{\Lambda}v(\bar t) |
 + \eta(\bar t))\in\R.
\eeq
By \eqref{hl1}, we can define, for $t\in[T,T_*)$,
$$h(t)=\int_{t}^{T_*} H(y(\tau))(\Lambda-\lambda(\tau))\d\tau\in\R.$$
We rewrite \eqref{hl1} for $\bar t=t$ as
\beqs
h(t)=\ln (| R_{\Lambda}v(t) | + \eta(t))=\ln (1+(| R_{\Lambda}v(t) | -1)+ \eta(t)).
\eeqs
With this expression and properties \eqref{oneR} and \eqref{eta}, we have, as $t\to T_*^-$, 
\beq\label{hl2}
| h(t) |
=\bigo(\big|| R_{\Lambda}v(t) | -1\big|+ |\eta(t)|)
=\bigo((T_*-t)^{\varep_0}+(T_*-t)^{\delta/\alpha})
=\bigo((T_*-\bar t)^{\varep_2}),
\eeq
where $\varep_2=\min\{\varep_0,\delta/\alpha\}$.

Solving for $R_{\Lambda}v(t)$ from \eqref{dRv} by the variation of constants formula, one has
\beq\label{RvH}
R_{\Lambda}v(t)
 =e^{-\int_{\bar t}^t H(y(\tau))(\Lambda-\lambda(\tau))\d\tau } \left( R_{\Lambda}v(\bar t) 
 + \int_{\bar t}^t e^{\int_{\bar t}^\tau H(y(s))(\Lambda-\lambda(s))\d s}  R_{\Lambda}g(\tau)\d\tau \right) .
\eeq
Using the same arguments as those from \eqref{gg} to \eqref{eta} with $R_\Lambda g(\tau)$ replacing $g_1(\tau)$, we obtain, similar to \eqref{eta0} and \eqref{eta}, that
\begin{align*}
\lim_{t\to T_*^-} \int_{\bar t}^{t} e^{\int_{\bar t}^\tau H(y(s))(\Lambda-\lambda(s))\d s}  R_{\Lambda}g(\tau)\d \tau 
&= \int_{\bar t}^{T_*} e^{\int_{\bar t}^\tau H(y(s))(\Lambda-\lambda(s))\d s}  R_{\Lambda}g(\tau) \d\tau\\
&=X(\bar t)\in\R^n
\end{align*}
for all $\bar t\in[T,T_*)$, and  
 \beq \label{oX}
 |X(\bar t)|=\bigo((T_*-\bar t)^{\varep_2})\text{ as $\bar t\to T_*^-$.}
\eeq

Taking $t\to T_*^-$ in \eqref{RvH} gives
\beqs
\lim_{t\to T_*^-}R_{\Lambda}v(t)=v_*\eqdef e^{-h(\bar t)}( R_{\Lambda}v(\bar t) +X(\bar t))\in\R^n.
\eeqs
Note that
\beqs
X(t)=\int_{t}^{T_*} e^{h(t)-h(\tau)}  R_{\Lambda}g(\tau) \d\tau.
\eeqs
Using $h(t)$, $X(t)$ and $v_*$, we rewrite \eqref{RvH} as
\beqs
R_{\Lambda}v(t)
=e^{h(t)-h(\bar t)} \left (R_{\Lambda}v(\bar t)+X(\bar t)-\int_t^{T_*} e^{h(\bar t)-h(\tau)} R_{\Lambda}g(\tau) \d\tau\right)
=e^{h(t)}v_*-X(t).
\eeqs
Thus,
\beqs
|R_{\Lambda}v(t)-v_*|
\le |e^{h(t)}-1|\cdot |v_*| + |X(t)|.
\eeqs
Using \eqref{hl2} and \eqref{oX},  we deduce, as $t\to T_*^-$,
\beqs
|R_{\Lambda}v(t)-v_*|
=\bigo(|h(t))|+|X(t)|)=\bigo((T_*-t)^{\varep_2}).
\eeqs
Therefore, we obtain the desired estimate \eqref{RLv}.
Because of \eqref{RLv} and \eqref{oneR}, we have $|v_*|=1$.
The proof is complete.
\end{proof}

Some immediate consequences of \eqref{remv} and \eqref{RLv} are
\beq\label{Rvlim}
\lim_{t\to T_*^-} R_{\Lambda}v(t)=\lim_{t\to T_*^-} v(t)=v_*,
\eeq 
and
\beq\label{vlim}
|v(t)-v_*|=\bigo((T_*-t)^{\varep}) \text{ as $t\to T_*^-$ for some $\varep>0$.}
\eeq

\section{Proofs of the main theorems}\label{gencase}

We prove  Theorems \ref{mainthm} and \ref{mainthm2} in this section.

\begin{proof}[Proof of Theorem \ref{mainthm}]
Since $H$ satisfies Assumption \ref{Hcond} then, thanks to Lemma \ref{Hcts}, it is positive, continuous on $\R^n\setminus \{0\}$, and has property (HC)  on $\R^n\setminus \{0\}$.

\medskip
\noindent\textit{Case 1.} Consider the case $A$ is symmetric first. We use the same notation as in Section \ref{symcase}. 

The estimate \eqref{newIR} already comes from \eqref{remy}. 
We prove  \eqref{newRy} next. Applying $R_\Lambda$ to equation \eqref{mainode}, we have
\beq\label{Ry1}
(R_\Lambda y)' =-\Lambda  H(y)R_\Lambda y+R_\Lambda f(t).
\eeq

Let $v_*$ be the unit vector in Proposition \ref{lem4}, and $\varep_0>0$ be such that \eqref{remv}, \eqref{remy}, \eqref{RLv}, and \eqref{vlim} hold for $\varep=\varep_0$.
We rewrite $H(y)$ on the right-hand side of \eqref{Ry1} as
\beqs
H(y)=|y|^{-\alpha} H(v)=|R_\Lambda y|^{-\alpha} H(v_*)+g_0(t),
\eeqs
where
\begin{align*}
g_0(t)&=|y(t)|^{-\alpha} (H(v(t))-H(v_*))+(|y(t)|^{-\alpha} -|R_\Lambda y(t)|^{-\alpha})H(v_*)\\
&=|y(t)|^{-\alpha} \big\{ H(v(t))-H(v_*)+(1 -|R_\Lambda v(t)|^{-\alpha})H(v_*)\big\}.
\end{align*}
Then 
\beq\label{Rygood}
(R_\Lambda y)' 
= -\Lambda H(v_*) |R_\Lambda y|^{-\alpha}R_\Lambda y+f_0(t),
\eeq
where
$f_0(t)=-\Lambda g_0(t)R_\Lambda y(t) +R_\Lambda f(t)$.

We estimate $|g_0(t)|$ first. We combine inequality \eqref{FHder} in Definition \ref{HCdef} applied to 
$F=H$, $E=\mathbb S^{n-1}$, $x_0=v_*$  and  $x=v(t)$ for $t$ sufficiently close to $T_*$, with estimate \eqref{vlim}.
Then there exists a number $\gamma>0$ such that, as $t\to T_*^-$,
\beq\label{Hvv}
|H(v(t))-H(v_*)|=\bigo(|v(t)-v_*|^\gamma) =\bigo((T_*-t)^{\gamma\varep_0}).
\eeq
It is elementary to see  $|s^{-\alpha}-1|=\bigo(|s-1|)$ as $s\to 1$.
By taking $s=|R_\Lambda v(t)|$,  which goes to $1$ as $t\to T_*^-$  thanks to \eqref{Rvlim}, and using estimate  \eqref{oneR}, we derive
\beq\label{1Ral}
\big|1 -|R_\Lambda v(t)|^{-\alpha}\big|
=\bigo\left(\big |1-|R_\Lambda v(t)|\big| \right)=\bigo((T_*-t)^{\varep_0})  \text{ as $t\to T_*^-$.}
\eeq
Combining \eqref{Hvv}, \eqref{1Ral} with \eqref{yplus}, we obtain
\beq\label{g1}
|g_0(t)|=\bigo(|y(t)|^{-\alpha} ((T_*-t)^{\gamma\varep_0}+(T_*-t)^{\varep_0}))=\bigo((T_*-t)^{-1+\varep_1})
\eeq
as $t\to T_*^-$, where $\varep_1=\varep_0\min\{1,\gamma\}$.

We estimate $|f_0(t)|$ now. As $t\to T_*^-$, we have from  \eqref{yplus} and \eqref{fbound}  that
\beq\label{Ry5}
|R_\Lambda y(t)|=\bigo((T_*-t)^{1/\alpha})\text{ and }
|R_\Lambda f(t)|=\bigo((T_*-t)^{1/\alpha-1+\delta/\alpha}).
\eeq
Combining \eqref{g1} and \eqref{Ry5} gives, as $t\to T_*^-$, 
\beqs
|f_0(t)|=\bigo((T_*-t)^{-1+\varep_1}(T_*-t)^{1/\alpha}+(T_*-t)^{1/\alpha-1+\delta/\alpha})
=\bigo((T_*-t)^{1/\alpha-1+\varep_2/\alpha}),
\eeqs
where $\varep_2=\min\{\varep_1\alpha ,\delta\}$.
By the virtue of the lower bound of $|R_\Lambda y(t)|$ in \eqref{RLy}, we actually have
\beqs
|f_0(t)|=\bigo(|R_\Lambda y(t)|^{1-\alpha+\varep_2}).
\eeqs

Fix a number $t_0'\in[t_0,T_*)$ such that $R_\Lambda y(t)\ne 0$ and 
$|f_0(t)|\le M_0|R_\Lambda y(t)|^{1-\alpha+\varep_2}$ for all $t\in[t_0',T_*)$, where $M_0$ is a positive constant.   Of course, one already has $R_\Lambda y(T_*)=0$.

We apply Theorem \ref{simthm} to solution $R_\Lambda y(t)$ of equation \eqref{Rygood} on the interval $[t_0',T_*)$. Specifically, $R_\Lambda y(t)$ satisfies equation \eqref{basicf} on $(t_0',T_*)$ with constant $a=\Lambda H(v_*) $ and $f=f_0$. Then there exists a nonzero vector $\xi_*\in\R^n$ such that 
\beq\label{Rxi3}
|R_\Lambda y(t)-(T_*-t)^{1/\alpha}\xi_* |=\bigo((T_*-t)^{1/\alpha+\varep_3})\text{ for some $\varep_3>0$,}
\eeq
and 
\beq\label{Hv1}
|\xi_*|=(\alpha \Lambda H(v_*) )^{1/\alpha} .
\eeq
The desired statement  \eqref{newRy} immediately follows from \eqref{Rxi3}.

Because 
$$\xi_*=\lim_{t\to T_*^-} (T_*-t)^{-1/\alpha}R_\Lambda y(t),$$
 by \eqref{newRy}, and the fact $\xi_*\ne 0$, we have $\xi_*\in R_\Lambda(\R^n)\setminus\{0\}$. Hence, $\xi_*$ is an eigenvector of $A$ associated with $\Lambda$.

Next, we prove \eqref{mainest}. Writing 
$$y(t)-(T_*- t)^{1/\alpha}\xi_*=(I_n-R_\Lambda)y(t)+(R_\Lambda y(t)- (T_*- t)^{1/\alpha}\xi_*),$$
 and using  the estimate  \eqref{remy} with $\varep=\varep_0$, and estimate \eqref{Rxi3} yield
\beqs
|y(t)-(T_*- t)^{1/\alpha}\xi_*| = \bigo((T_*- t)^{1/\alpha+\varep_0}+(T_*- t)^{1/\alpha+\varep_3}).
\eeqs
This implies \eqref{mainest} with $\varep=\min\{\varep_0,\varep_3\}$.

Finally, we prove \eqref{xiHA}. Let $w(t)=(T_*- t)^{-1/\alpha}y(t)$ and write 
$v(t)=w(t)/|w(t)|$.
Passing $t\to T_*^-$ and noticing that $v(t)\to v_*$ and $w(t)\to \xi_*$, thanks to \eqref{Rvlim} and \eqref{mainest},
we obtain
\beq\label{vxi} 
v_*=\xi_*/|\xi_*|
\eeq 
Then it follows from  \eqref{Hv1}, the fact $H$ is positively homogeneous of degree $-\alpha$, and relation \eqref{vxi}   that
$$1=\alpha \Lambda H(v_*)  |\xi_*|^{-\alpha}=\alpha \Lambda H(|\xi_*|v_* )
=\alpha \Lambda H(\xi_*).$$
Hence, we obtain \eqref{xiHA}. 
This completes the proof for the case of symmetric matrix $A$.

\medskip
\noindent\textit{Case 2.} Consider the case $A$ is not symmetric. Let $A_0$ and $S$ be as in \eqref{Adiag}.
Same as \eqref{zSy}, we set $z(t)=Sy(t)$ on $[t_0,T_*]$. Then $z(t)$ satisfies equation \eqref{zeq} with $\widetilde H$ and $\widetilde f$ defined in \eqref{HRz}. 
One can verify the following facts. 
\begin{itemize}
\item $z(t)\ne 0$ for  $t\in [t_0,T_*)$ and $z(T_*)=0$.
\item $\widetilde H\in \mathcal H_{-\alpha}(\R^n)$ and, thanks to parts \ref{HH1} and \ref{HH5} of Lemma \ref{Hcts}, 
$\widetilde H>0$ on $\mathbb S^{n-1}$ and $\widetilde H$ has property (HC) on $\mathbb S^{n-1}$.
\item Thanks to \eqref{fzMtil},  $\widetilde f(t)$ and $z(t)$  satisfy condition \eqref{frate} with the same numbers $\alpha,\delta,t_0,T_*$, and constant $\widetilde M$ in place of $M$.
\end{itemize} 

We apply the results already established in Case 1 above to the solution $z(t)$ of equation \eqref{zeq}. Note that $A_0$ replaces $A$ and $\widehat R_{\lambda_j}$  replaces $R_{\lambda_j}$.
Then there exist an eigenvalue $\Lambda$ of $A_0$ and an eigenvector $\xi_0$ of $A_0$ associated  with $\Lambda$ such that 
\beq\label{hatRz}
\begin{split}
|(I_n-\widehat R_\Lambda)z(t)|=\bigo((T_*-t)^{1/\alpha+\varep}),\\
|\widehat R_\Lambda z(t)-(T_*-t)^{1/\alpha}\xi_0 |=\bigo((T_*-t)^{1/\alpha+\varep})
\end{split}
\eeq
for some number $\varep>0$, and
\beq\label{A0xi} 
\alpha \Lambda \widetilde H(\xi_0)=1.
\eeq

Let $\xi_*=S^{-1}\xi_0$. 
Then $\Lambda$ is an eigenvalue of $A$ and  $\xi_*$ is an eigenvector of $A$ associated  with $\Lambda$. 
We rewrite \eqref{hatRz} as 
\beqs 
\begin{split}
|S(I_n-R_\Lambda)y(t)|=\bigo((T_*-t)^{1/\alpha+\varep}),\\
|S( R_\Lambda y(t)-(T_*-t)^{-1/\alpha}\xi_* )|=\bigo((T_*-t)^{1/\alpha+\varep})
\end{split}
\eeqs
which imply \eqref{newIR} and \eqref{newRy}. 
By  \eqref{newIR},  \eqref{newRy} and the triangle inequality, we obtain \eqref{mainest} in the same way as in Case 1.

Finally, \eqref{xiHA} follows from \eqref{A0xi} and the relation $\widetilde H(\xi_0)=H(\xi_*)$.
The proof of Theorem \ref{mainthm} is complete.
\end{proof}

\begin{proof}[Proof of Theorem \ref{mainthm2}]
 Set $f(t)=G(t,y(t))$ for $t\in[t_0,T_*)$.
Thanks to \eqref{limzero}, the is a number $t'_0\in[t_0,T_*)$ such that
\beqs
|y(t)|\le r_* \text{ for all } t\in[t'_0,T_*).
\eeqs
This property and \eqref{Fcond} imply 
\beqs
|f(t)|\le c_* |y(t)|^{1-\alpha+\delta}\text{ for all }t\in[t'_0,T_*).
\eeqs
Thus, $f(t)$ satisfies condition \eqref{frate} with $t'_0$ replacing $t_0$. Applying Theorem \ref{mainthm} to the interval $[t'_0,T_*)$ in place of $[t_0,T_*)$, we obtain the statements of Theorem \ref{mainthm2}.
\end{proof}

\section{Examples and applications}\label{examples}

\subsection{Examples}\label{egs}
We give some examples for the function $H$ in Assumptions \ref{assumpH} and \ref{Hcond}. 
For simplicity, we consider the case dimension $n=2$. One can easily generalize them for any higher dimension $n$.
\begin{enumerate}[label=\rnum]
\item  For $x=(x_1,x_2)\in \R^2\setminus\{0\}$, let
\begin{align*}
H_1(x)&=(x_1^4+5x_2^4)^{-3}, \\
H_2(x)&=\left[ (3|x_1|^{3/2}+|x_2|^{3/2})^{1/3}+(2|x_1|^{5/3}+7|x_2|^{5/3})^{3/10} \right]^{-1/8}.
\end{align*}
Then $H_1$ is in $\mathcal H_{-3/4}(\R^2,\R)$ while  $H_2$ is in $\mathcal H_{-1/16}(\R^2,\R)$. Both functions belong to $C^1(\R^2\setminus\{0\})$. Hence, they have property (HC) on $\mathbb S^1$ with the same power $\gamma=1$ in \eqref{FHder}.
 \item  
Another example is 
\beqs
H(x)=\frac{\sqrt{|x_1|}+\sqrt{|x_2|}}{|x|} \text{ for $x=(x_1,x_2)\in \R^2\setminus\{0\}$.}
\eeqs
Then $H$ belongs to $\mathcal H_{-1/2}(\R^2,\R)$, is positive on $\mathbb S^1$ and has property (HC) on  $\mathbb S^1$ with the same power $\gamma=1/2$ in \eqref{FHder}. Unlike the previous two examples, this function $H$ is not in  $C^1(\R^2\setminus\{0\})$.
\end{enumerate}
In fact, the function $H$ can be very complicated, see similar examples in \cite[Example 5.7]{H7} and \cite[Section 6]{CaHK1}.

\subsection{Applications}\label{bioeg}

We give an application to a population model \cite{KaKa2016} in mathematical biology.
Consider an inhomogeneous population composed of individuals with different death rates.
This population consists of $n$ clones, each has the size $y_i(t)$, for $i=1,2,\ldots,n$,  at time $t$ with the death rate $k_i>0$, where the constants $k_i$ are mutually distinct.
The model \cite[Equations (3.1) and (3.2)]{KaKa2016} is
\beq\label{biosys}
 y_i' = - k_i y_i g(N) \quad \text{ for $i=1,2,\ldots,n$, }
\eeq
where 
\beq \label{total}
N=N(t)\eqdef y_1(t)+y_2(t)+\ldots+y_n(t)\text{  is the total population size,}
\eeq 
and $g(s):(0,\infty)\to (0,\infty)$ is an appropriate function.
 Note that  the integral form of $N(t)$ in  \cite[Equation (3.2)]{KaKa2016} becomes the finite sum in \eqref{total}.  

Below, we consider the case 
\beq\label{gpower}
g(s)=k s^{-\alpha} \text{ for any $s>0$, where $k>0$ and $\alpha>0$ are some constants. }
\eeq 
  This generalizes the consideration $\alpha=1$ in \cite[Equation (4.2)]{KaKa2016}.
  
Define the matrix $A$ and function $H(x)$, for $x=(x_1,\ldots,x_n)\in \R^n\setminus\{0\}$, as follows
$$A=k\, {\rm diag}[k_1,k_2,\ldots,k_n]\text{  and } H(x)=(|x_1|+|x_2|+\ldots+|x_n|)^{-\alpha}.$$
 
Denote $\mathcal C=\{ x=(x_1,\ldots,x_n)\in \R^n:x_i\ge 0 \text{ for }i=1,2,\ldots,n\}$.
With $y=(y_1,\ldots,y_n)$, if $y(t)\in \mathcal C\setminus\{0\}$ is a solution of the system \eqref{biosys}, \eqref{total}, \eqref{gpower}, then it is a solution of system \eqref{yReq} with $G\equiv 0$, i.e.,
\beq\label{reduced}
y'=-H(y)Ay.
\eeq

Note that the matrix $A$ is symmetric and satisfies Assumption \ref{assumpA}. The function $H$ clearly belongs to $H_{-\alpha}(\R^n,\R)$.

\medskip \noindent
\textbf{Claim.}\textit{ The function $H(x)$ has property  (HC) on $\mathbb S^{n-1}$ with the same power $\gamma=\min\{1,\alpha\}$ in \eqref{FHder}.}

Thus, $H$ satisfies Assumption \ref{Hcond}.

 \begin{proof}[Proof of the Claim.] Denote the $\ell^1$-norm $|x|_{\ell^1}=|x_1|+|x_2|+\ldots+|x_n|$. Then there exists a constant $c_0\ge 1$ such that
  $$c_0^{-1}|x|\le |x|_{\ell^1}\le c_0 |x| \text{ for all }x\in \R^n.$$
  For $x,y\in \mathbb S^{n-1}$, we have both norms $|x|_{\ell^1}$ and $|y|_{\ell^1}$ belong to the interval  $[c_0^{-1},c_0] $, and, hence,
  \begin{align*}
  |H(x)-H(y)|=\frac{\left| |y|_{\ell^1}^\alpha-|x|_{\ell^1}^\alpha\right| } {|x|_{\ell^1}^\alpha |y|_{\ell^1}^\alpha}
  \le c_0^{2\alpha} \left| |y|_{\ell^1}^\alpha-|x|_{\ell^1}^\alpha\right| .
  \end{align*}

When $0<\alpha\le 1$,  utilizing the inequality $ \left| |y|_{\ell^1}^\alpha-|x|_{\ell^1}^\alpha\right| \le  \left| |y|_{\ell^1}-|x|_{\ell^1}\right|^\alpha$, and applying the triangle inequality, one obtains
  \begin{align*}
  |H(x)-H(y)|  \le c_0^{2\alpha}  |y-x|_{\ell^1}^\alpha 
  \le c_0^{3\alpha}  |y-x|^\alpha.
  \end{align*}

When $\alpha>1$, applying the Mean Value Theorem to the function $s\mapsto s^\alpha$ and values $s_1=|y|_{\ell^1}$, $s_2=|x|_{\ell^1}$ both belonging to the interval $[c_0^{-1},c_0]$ yields the existence of a number $C_0>0$ depending on $c_0$ and $\alpha$ such that 
$$ \left| |y|_{\ell^1}^\alpha-|x|_{\ell^1}^\alpha\right| \le C_0 \left| |y|_{\ell^1}-|x|_{\ell^1}\right|.$$ 
Then applying the triangle inequality again, we deduce
  \begin{align*}
  |H(x)-H(y)|\le c_0^{2\alpha}  C_0 \left| |y|_{\ell^1}-|x|_{\ell^1}\right|
  \le c_0^{2\alpha}  C_0  |y-x|_{\ell^1}
  \le c_0^{2\alpha+1} C_0 |y-x|.
  \end{align*}  
  
  Therefore, the Claim is true.
  \end{proof}

\begin{theorem}\label{bio2}
Let $y_0\in\mathcal C\setminus\{0\}$ be sufficiently small, then there exist a number $T_*>0$ and a function $y\in C^1([0,T_*),\mathcal C\setminus\{0\})$  such that $y(t)$ satisfies  \eqref{biosys}, \eqref{total}, \eqref{gpower} for all $t\in(0,T_*)$, $y(0)=y_0$, and \eqref{limzero} holds.
\end{theorem}
\begin{proof}
Applying Theorem \ref{mustdie} to equation \eqref{reduced} and $t_0=0$,  we obtain a number $T_*>0$ and a function $y\in C^1([0,T_*),\R^n\setminus\{0\})$ such that $y(t)$ satisfies  equation \eqref{reduced} for all $t\in(0,T_*)$, $y(0)=y_0$, and \eqref{limzero} holds. Since $y(t)\ne 0$, for all $t\in[0,T_*)$, we have, for each $i=1,2,\ldots,n$,
\beq
y_i(t)=y_i(0)e^{-kk_i\int_0^t H(y(\tau))\d\tau}.
\eeq
Together with the fact $y(0)\in \mathcal C\setminus\{0\}$, this implies $y(t)\in \mathcal C\setminus\{0\}$, and hence $y(t)$ is a solution of \eqref{biosys}, \eqref{total}, \eqref{gpower} for all $t\in(0,T_*)$.
\end{proof}
 
  Let $\{e_i:i=1,2,\ldots,n\}$ denote the standard canonical basis of $\R^n$.

\begin{theorem}\label{bio1}
 Let $T_*>0$ and $y\in C^1([0,T_*),\mathcal C\setminus\{0\})$ be such that $y(t)$ satisfies  \eqref{biosys}, \eqref{total}, \eqref{gpower} for all $t\in(0,T_*)$ and \eqref{limzero} holds.
 Then there is an integer $i\in[1,n]$ and a number $\varep>0$ such that
 \beq\label{applest}
 \left|y(t)-(\alpha k k_i)^{1/\alpha}(T_*-t)^{1/\alpha}e_i\right|=\bigo\left((T_*-t)^{1/\alpha+\varep}\right) \text{ as } t\to T_*^-.
 \eeq
\end{theorem}
\begin{proof}
Since $y(t)\in \mathcal C\setminus\{0\}$, the function $y$ in fact is a solution of \eqref{reduced} on $(0,T_*)$ with the extinction time $T_*$. By the virtue of Theorem \ref{mainthm2} applied to equation \eqref{reduced} and $t_0=0$, there exist a number $\varep>0$, an eigenvalue $\Lambda$ of $A$ and a corresponding eigenvector $\xi_*$, such that
\beq\label{aest1}
\left|y(t)-(T_*-t)^{1/\alpha}\xi_*\right|=\bigo\left((T_*-t)^{1/\alpha+\varep}\right) \text{ as } t\to T_*^-.
\eeq 
Clearly, $\Lambda=k k_i$ for some $1\le i\le n$, and $\xi_*=Ke_i$ for some number $K\ne 0$.
 Multiplying  estimate \eqref{aest1} by $ (T_*-t)^{-1/\alpha}$ and using the $i$th coordinate, one derives 
 $$K=\lim_{t\to T_*^-} y_i(t) (T_*-t)^{-1/\alpha},$$
 which implies $K\ge 0$. 
 Moreover, thanks to \eqref{xiHA}, $|K|^\alpha=\alpha k k_i$. Therefore, $K=(\alpha k k_i)^{1/\alpha}$ and we obtain \eqref{applest} from \eqref{aest1}.
 \end{proof}

 Note in this demonstration that the system \eqref{biosys} is simpler than \eqref{yReq} and \eqref{mainode} which were theoretically studied in the previous sections.

\begin{remark} The following final remarks are in order.

\begin{enumerate}[label=\rnum]
\item 
There is another totally different approach to the local properties of solutions of ODE based on the Poincar\'e--Dulac normal form \cite{ArnoldODEGeo,BibikovBook,LefschetzBook}. It has been generalized and  developed by many and for so long, see the books \cite{BrunoBook1989,BrunoBook2000,KFbook2013},  recent papers such as \cite{Bruno2004,Bruno2008c,Bruno2012,Bruno2018}, and references therein. Our approach is relatively new and only recently used to explore different classes of equations and problems in ODE.
For  more comparisons between the other approach and ours, see \cite[Remark 5.8]{H7} and \cite[Remark 6.14]{CaHK1}.

\item It is an open problem whether the solutions of \eqref{yReq} admit an asymptotic expansion similar to those in \cite{FS84a,CaHK1} near the extinction time. 
Taking some indications from \cite{CaHK1}, we expect, in the case the answer is affirmative,  that our result \eqref{mainest} will play an important role in its proof.
\end{enumerate}
\end{remark}

\appendix

\section{}

\begin{proof}[Proof of Lemma \ref{Hcts}]
Denote $E=\R^n\setminus \{0\}$.
For any $x\in E$, we can write $F(x)=|x|^{-\alpha}F(x/|x|)$. Hence, part \ref{HH1} is obvious.
For parts \ref{HH2}--\ref{HH4}, the proofs are similar to the proof of \cite[Lemma 5.1]{H7},  and \textit{``the verification of Assumption 5.2 for $\widetilde H$"} in the proof of \cite[Theorem 5.3]{H7}. We present the key arguments here.

Let $x,\xi\in E$. Then
\begin{align}
 |F(x)-F(\xi)|
 &=  \left ||x|^{-\alpha}F(x/|x|)-|\xi|^{-\alpha}F(\xi/|\xi|)\right| \notag\\
 &\le |x|^{-\alpha}\left | F(x/|x|)-F(\xi/|\xi|)\right| +\left||x|^{-\alpha}- |\xi|^{-\alpha}\right| \cdot\big|F(\xi/|\xi|)\big| \label{Htri}
\end{align}
Using  inequality \eqref{Htri} and the fact that functions $x\in E\mapsto x/|x|$ and $x\in E\mapsto |x|^{-\alpha}$ are $C^1$-functions, we can prove parts \ref{HH2} and \ref{HH3}.

We prove part \ref{HH4} now. Suppose $F$ has property (HC) on  $\mathbb S^{n-1}$.
By part \ref{HH3},  $F$ has property (HC) on  $E$. Clearly, $\varphi$ is a continuous function on $E$.
Let $\xi$ be any vector in $E$. Consider $x\in E$ sufficiently close to $\xi$. 
As $x\to \xi$, we have $\varphi(x)\to \varphi(\xi)$. Using inequality \eqref{FHder} for function $F$ and $x_0:=\varphi(\xi)\in E$, $x:=\varphi(x)\in E$ with constant $C$ and power $\gamma$, and then inequality \eqref{FHder} again for function $\varphi$ and $x_0:=\xi\in E$, $x\in E$ with constant $C'$ and power $\gamma'$, we have
\beqs
|F(\varphi(x))-F(\varphi(\xi))|\le C|\varphi(x)-\varphi(\xi)|^\gamma\le C\,{C'}^\gamma |x-\xi|^{\gamma\gamma'}.
\eeqs
Therefore, the function $F\circ\varphi$ has property (HC) on $E$.

Part \ref{HH5} is a direct consequence of part \ref{HH4} with $\varphi(x)=Kx$. We omit the details.
\end{proof}

\medskip
\noindent\textbf{Data availability.} 
No new data were created or analyzed in this study.

\medskip
\noindent\textbf{Funding.} No funds were received for conducting this study. 

\medskip
\noindent\textbf{Conflict of interest.}
There are no conflicts of interests.

\bibliography{paperbaseall}{}
\bibliographystyle{plain}

 \end{document}